\documentclass[11pt]{amsart}

\usepackage{amsmath, amsthm, amssymb, amsfonts, mathtools, mathrsfs}
\usepackage{hyperref}
\usepackage[backend = biber, style = numeric, sorting = anyt]{biblatex}

\AtEveryBibitem{%
  \clearfield{doi}%
  \clearfield{isbn}%
  \clearfield{issn}%
  \clearfield{url}%
  \clearlist{language}%
}
\DeclareFieldFormat{title}{#1}
\DeclareFieldFormat{journaltitle}{#1}
\DeclareFieldFormat{booktitle}{#1}
\renewbibmacro{in:}{}

\theoremstyle{plain}
\newtheorem{theorem}{Theorem}[section]
\newtheorem{lemma}[theorem]{Lemma}
\newtheorem{corollary}[theorem]{Corollary}
\newtheorem{proposition}[theorem]{Proposition}

\theoremstyle{definition}
\newtheorem{example}[theorem]{Example}

\theoremstyle{remark}
\newtheorem{remark}[theorem]{Remark}

\numberwithin{equation}{section}

\title[Cohomological Milnor Fibrations]{Cohomological Milnor Fibrations of Landau--Ginzburg Models}

\author{Huijun Fan}
\address{School of Mathematics and Statistics, Wuhan University, Wuhan, Hubei, China}
\email{fanhuijun@whu.edu.cn, fanhj@math.pku.edu.cn}
\thanks{supported by National Key R\&D Program of China (Grant No. 2020YFE0204200), ZYYJQT202600011, NSFC(11831017,11890660, 11890661) and NSFC-RSF 1201101428, NSFC-RFBR 11661131005.}
\author{Jieran Wang}
\address{School of Mathematics, Sun Yat-sen University, Guangzhou, Guangdong, China}
\email{wangjr88@mail.sysu.edu.cn}
\date{\today}

\keywords{Landau--Ginzburg model; Milnor number; tameness; Lefschethz thimble; Milnor fibration; Gauss--Manin connection}
\subjclass[2020]{32S25; 32S40; 14N35; 32Q15}

\begin{document}

\begin{abstract}
    We study the global topology of \emph{fundamentally tame} Landau--Ginzburg (LG) models on complete K\"ahler manifolds.
    We first show that the superpotential $f$ is surjective onto $\mathbb{C}$: in the non-proper holomorphic setting this is far from automatic, and here it is deduced from fundamental tameness together with the completeness of $M$.
    We then prove that $f$ restricts to a locally trivial fibration over the set of its regular values, which yields a global generalization of the classical Milnor fibration and provides an Ehresmann-type fibration in the non-proper setting.
    Assuming that $f$ has finitely many critical points, we use a Morse-theoretic argument to show that the relative homology of $\bigl(M,f^{-1}(c)\bigr)$, for a regular value $c$, is concentrated in the middle degree and has rank equal to the global Milnor number $\mu_f$.
    We further identify the relative homotopy type of the fibers with that of the sublevel sets of $f$ at infinity.
    Finally, for deformations satisfying a uniform tameness condition on a Stein manifold, we construct the relative cohomology bundles at positive and negative infinity, each equipped with a natural flat Gauss--Manin connection.
    Their direct sum is called the \emph{cohomological Milnor bundle}.
\end{abstract}

\maketitle

\section{Introduction}

Landau--Ginzburg (LG) B-models and $tt^*$-geometry occupy a central place in two dimensional topological field theory and mirror symmetry, which have tight connection to the singularity theory. The singularity theory studies the local behavior of a holomorphic function at an isolated singularity. The basic invariants of the singularity are supplied by the Milnor fibration together with the Milnor number \cite{milnor1968singular}, and on its universal deformation space, K. Saito has  created the theory of primitive forms and buit the Frobenius manifold structure\cite{Saito1981Primitive,SaitoTakahashi2008,Hertling2003} by the Hodge-theoretic description through the Brieskorn lattice \cite{saito1989brieskorn}. On the other hand, the topological-field-theoretic background was provided by \cite{CecottiVafa1991,Dubrovin1993,Dubrovin1996}. The singularity theory gives a local theory of a holomorphic function, while LG models provide a global picture. The analytic and Hodge-theoretic framework of LG models was firstly developed by the first author \cite{fan2011,FanFang2021} by considering the twisted Cauchy--Riemann operators of LG models and their deformation. The final aim of this approach is to construct the LG B model quantization theory via analytical techniques and prove the mirror equivalence to the already known A-theory, FJRW theory (see \cite{FanJarvisRuan2013}). Afterwards, a series of work has been done (see  \cite{LI2022108165} and Wen \cite{Wen2016,WenFan2015}) along this way. A natural question is to study the topological informatioin of a LG model and develop the corresponding theory with respect to the local singularity theory. 

The passage from the local picture to the global one is, however, not automatic, and two difficulties arise.
First, it is not evident whether $f$ is surjective onto $\mathbb{C}$.
Second, the classical proof of the Milnor fibration over a disk invokes the Ehresmann fibration theorem, whose argument requires the map to be proper; without properness the flow that lifts a vector field from the base need not be complete, and the trivialization argument breaks down.
For polynomial functions the corresponding global theory was developed by Broughton \cite{Broughton1988}, where tameness of the polynomial controls the behavior at infinity; see also \cite{arnold2012singularities,Batyrev1993,dimca1992singularities,Kulikov_1998,LeDungTrang1977} for the singularity-theoretic background and \cite{Dubrovin1990ModernIII} for the homology theory involved.

The purpose of this article is to establish the analogous global structure for \emph{fundamentally tame} LG models on complete K\"ahler manifolds, within the analytic framework of \cite{fan2011,LI2022108165,Wen2016}.
Throughout, $M$ is a connected, noncompact, complete K\"ahler manifold of complex dimension $n$, and $f:M\to\mathbb{C}$ is a nonconstant holomorphic function.
It is fundamental tameness, together with the completeness of $M$, that replaces properness in the two obstructions above.
In the form in which it is used below, fundamental tameness provides a compact subset $K\subset M$ and a constant $C>0$ such that
\begin{equation*}
    \|\nabla f(p)\| > C \qquad\text{for all } p\in M\setminus K,
\end{equation*}
so that all critical points of $f$ lie in the compact set $K$, while the behavior of $f$ away from $K$ is controlled by a uniform gradient bound.

Our first result is a \emph{range theorem} (Theorem~\ref{thm:range}): if $(M,f)$ is a fundamentally tame LG model, then $f(M)=\mathbb{C}$.
Its proof is elementary, yet it already exhibits the mechanism that will be used throughout.
Writing $u=\operatorname{Re} f$ and $v=\operatorname{Im} f$, one shows that $u$ is unbounded both above and below by following the negative gradient flow of $u$: as the flow cannot remain in any compact set, completeness of $M$ forces its length to tend to infinity, while the gradient bound forces $u$ to decrease at a uniform rate.
The same argument applied to the level sets of $u$ then shows that $v$ is unbounded along $u^{-1}(\alpha)$ for every $\alpha\in\mathbb{R}$, which yields the surjectivity of $f$.

Our second result is a \emph{fibration theorem} (Proposition~\ref{prop:fibration}): $f$ restricts to a locally trivial fibration $f^{-1}\bigl(f(M)\setminus D\bigr)\to f(M)\setminus D$ over the set of its regular values, where $D$ denotes the set of critical values of $f$.
The argument follows the strategy of the Ehresmann fibration theorem and is inspired by \cite[Lemma 4.3]{Broughton1988}; since $f$ is not assumed to be proper, the lifted vector field is constructed explicitly, and fundamental tameness together with the completeness of $M$ is used to guarantee global existence of its flow.
This yields a global generalization of the classical Milnor fibration and provides an Ehresmann-type fibration in the non-proper setting.

Our third result computes the relative homology of the smooth fibers (Theorem~\ref{thm:main}): if $(M,f)$ is a fundamentally tame LG model whose superpotential has finitely many critical points, then for every regular value $c$ of $f$ the relative homology groups $H_*\bigl(M,f^{-1}(c)\bigr)$ are concentrated in degree $n$, with
\begin{equation*}
    H_n\bigl(M,f^{-1}(c)\bigr)\;\cong\;\mathbb{Z}^{\mu_f},
\end{equation*}
where $\mu_f=\sum_{p\in\operatorname{Crit}(f)}\mu_f(p)$ is the global Milnor number of $f$.
The proof adapts Milnor's classical Morse-theoretic argument \cite{milnor1963morse} to the non-proper, noncompact situation at hand: the auxiliary function $F_a=|f_a-c|^2$ is a nonnegative function whose zero locus is $f^{-1}(c)$ and which, outside a neighborhood of this zero locus, has exactly $\mu_f$ nondegenerate critical points, all of Morse index $n$, while compactness of the sublevel sets is replaced by the completeness of $M$ together with tameness of $F_a$ away from that neighborhood, in the spirit of Broughton \cite{Broughton1988}.
We also clarify the relation between regular fibers and the sublevel sets of $f$ at infinity (Proposition~\ref{prop:homotopy}): for $|\operatorname{Re} c|$ large, the pair $\bigl(M,f^{-1}(c)\bigr)$ is homotopy equivalent to the pair formed by $M$ and the corresponding sublevel set, and the common relative homotopy type is independent of $c$.

Our fourth and last result concerns deformations of the superpotential.
For a deformation $F(z,t)=f(z)+\sum_{i=1}^{m}t^i g_i(z)$ over a bounded parameter domain $S\subset \mathbb{C}^m$, and under a uniform tameness assumption (Theorem~\ref{thm:deformation}), the level set $E_c=\{F=c\}$ is a smooth hypersurface of $M\times S$ for $|\operatorname{Re} c|$ sufficiently large, and the projection $\pi:E_c\to S$ is a locally trivial smooth fibration.
If $M$ is Stein, combining this with the homotopy type at infinity yields (Theorem~\ref{thm:bundle}) the positive and negative relative cohomology bundles $\mathcal{H}^{+}$ and $\mathcal{H}^{-}$, each of rank $\mu$ and equipped with a natural flat Gauss--Manin connection.
We call their direct sum $\mathcal{H}_{\mathrm{tot}}=\mathcal{H}^{+}\oplus\mathcal{H}^{-}$ the \emph{cohomological Milnor bundle} associated with the deformation $F$.
In contrast with the classical Picard--Lefschetz picture, in which the Lefschetz thimbles of a Morse function form a basis of the relative homology, the construction above does not require $f_t$ to be Morse, and both bundles and their flat connections are defined over the whole deformation parameter space.

The article is organized as follows.
Section~\ref{sec:range} proves the range theorem, together with the auxiliary lemma on maximal integral curves of $-\nabla u$ that the argument requires, and records an application to holomorphic embeddings and polynomial superpotentials.
Section~\ref{sec:fibration} establishes the fibration theorem.
Section~\ref{sec:relative} computes the relative homology of the smooth fibers, and Section~\ref{sec:construction} constructs the cohomological Milnor bundles and relates the regular fibers to the sublevel sets of $f$ at infinity.

\section{Range Theorem}\label{sec:range}

As a preparation for our construction, we establish some basic properties of fundamentally tame LG models, beginning with the range theorem for $f$.
This result will later allow us to identify the relative homotopy type of the sublevel sets of $f$ with that of the fibers of $f$ when the real part of $f$ is sufficiently large.

\begin{theorem}\label{thm:range}
    If $(M, f)$ is a fundamentally tame LG model, then $f(M) = \mathbb{C}$.
\end{theorem}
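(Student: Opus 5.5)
The plan follows the mechanism sketched in the introduction. We write $u=\operatorname{Re} f$ and $v=\operatorname{Im} f$ and use the Kähler metric $g$. Since $f$ is holomorphic, the Cauchy--Riemann equations give $\nabla v = J\nabla u$, so
\[
\|\nabla u\| = \|\nabla v\|,
\]
and this common value is a fixed constant multiple of $\|\nabla f\|$ (depending only on the normalization of the norm). By fundamental tameness there is therefore a compact set $K$ and a constant $C'>0$ with $\|\nabla u\|=\|\nabla v\|\ge C'$ on $M\setminus K$.

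\textbf{Step 1: $u$ is unbounded above and below.} Take a point $p\notin K$ and let $\gamma:[0,T)\to M$ be the maximal integral curve of $-\nabla u/\|\nabla u\|^2$ starting at $p$. This field is defined wherever $\nabla u\neq 0$, in particular on $M\setminus K$. Along $\gamma$ we have $\frac{d}{dt}u(\gamma(t))=-1$, so $u$ drops at unit rate, and the speed is $1/\|\nabla u\|\le 1/C'$ while $\gamma$ stays outside $K$.

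The delicate point is that $\gamma$ may re-enter $K$, where the field can blow up near critical points. I would try to choose $p$ so that this cannot happen. Suppose instead that $u$ were bounded below. Choose $p$ with $u(p)<\min_K u$, which is possible if $u$ takes values below $\min_K u$. Since $u$ decreases along $\gamma$, the curve never enters $K$. If $T<\infty$, then $\gamma$ has finite length, at most $T/C'$. By completeness (Hopf--Rinow), $\gamma$ then stays in a compact set, so it converges and can be extended, a contradiction. Hence $T=\infty$, and $u(\gamma(t))=u(p)-t\to-\infty$.

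It remains to produce a point with $u<\min_K u$. Here I would use that $u$ is pluriharmonic and nonconstant, and argue along the flow of $-\nabla u$ started from a point of $M\setminus K$. Either the flow leaves every compact set, in which case the length bound together with completeness forces $u$ to decrease without bound, or the flow accumulates in $K$, where only finitely many critical values are available. The cleanest route is probably the auxiliary lemma on maximal integral curves mentioned in the outline: a maximal integral curve of $-\nabla u$ either converges to a critical point or leaves every compact set, with length tending to infinity. Applying it to a point $p$ not on the stable manifolds of the critical points, a dense open set of points, gives $u\to-\infty$. Replacing $u$ by $-u$ gives unboundedness above.

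\textbf{Step 2: $v$ is unbounded on each level set $u^{-1}(\alpha)$.} Fix $\alpha\in\mathbb{R}$, and first take $\alpha$ to be a regular value. Then $N=u^{-1}(\alpha)$ is a closed, hence complete, submanifold. Since $\nabla v=J\nabla u$ is orthogonal to $\nabla u$, the field $\nabla v$ is tangent to $N$, and $\|\nabla_N v\|=\|\nabla v\|\ge C'$ on $N\setminus K$. Running the argument of Step 1 for $v|_N$ on $N$ shows that $v$ is unbounded above and below on each component of $N$ meeting $M\setminus K$. The flow of $\pm\nabla v/\|\nabla v\|^2$ is the Hamiltonian-type flow preserving $u$, so it stays in $N$.

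By Step 1, $N$ is nonempty for every $\alpha$. Connectedness of the image of a component of $N$ then gives $v(N)=\mathbb{R}$, that is, $f(N)=\alpha+i\mathbb{R}$. If $\alpha$ is a critical value of $u$, then since $u$ is not Morse-relevant here, I would instead note that $f(M)$ is open (open mapping theorem) and closed. Closedness follows because any limit point $w$ of $f(M)$ lies on a line $\operatorname{Re}=\alpha'$ that is covered for nearby regular $\alpha'$, and a properness-type estimate from fundamental tameness over compact sets in $\mathbb{C}$ lets one pass to the limit. Alternatively, this case is simply absorbed by openness plus density of regular values. Thus $f(M)=\mathbb{C}$.

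\textbf{Main obstacle.} I expect the main obstacle to be the integral-curve analysis near $K$: ensuring the flow does not get trapped in $K$ without $u$ diverging. This is where completeness and the gradient bound must be combined carefully, most likely through the auxiliary lemma on maximal integral curves of $-\nabla u$.
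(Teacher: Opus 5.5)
\textbf{Verdict: there are two genuine gaps.} Step 1 never produces a point with $u<\min_K u$ (your proposed route is circular), and Step 2 does not handle the lines $\operatorname{Re}w=\alpha$ where $u^{-1}(\alpha)$ contains critical points.

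\textbf{Step 1.} Your flow argument is correct once you have a point $p$ with $u(p)<\min_K u$, but the route you sketch for producing it is circular. Suppose $u$ were bounded below. Along any forward trajectory of $-\nabla u$ one has $-\frac{d}{dt}u=\|\nabla u\|\,\|\dot\gamma\|\ge C'\|\dot\gamma\|$ off $K$, so the length travelled outside $K$ is bounded. Every trajectory therefore stays in a bounded, hence (Hopf--Rinow) compact, set and accumulates on the critical locus. So under the contrary hypothesis there are no points \emph{off} the ``stable manifolds''. Claiming that such points form a dense open set is equivalent to the statement you want to prove. In addition, $\operatorname{Crit}(f)$ may be non-isolated and degenerate, so no stable-manifold theory is available anyway.

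You already mention that $u$ is pluriharmonic and nonconstant, and that is exactly what is needed, via the paper's one-line argument. The function $u$ is harmonic for the K\"ahler metric, so by the strong minimum principle it does not attain its infimum on the connected manifold $M$. Since $\min_K u$ is attained on the compact set $K$, this forces $\inf_M u<\min_K u$. Your flow argument (the paper's Lemma~\ref{lem:integral-curve}), started from any point with $u<\min_K u$, then gives the contradiction.

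\textbf{Step 2.} Your regular-value case matches the paper's case $k=0$, provided you use that $\|\nabla v\|$ is bounded below on \emph{all} of $N$, not only on $N\setminus K$. On $N\cap K$ this holds by compactness, since that set contains no critical points. Without it you would need a minimum principle for $v|_N$, which you do not have.

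The second genuine gap is the finitely many lines $\operatorname{Re}w=\alpha$ for which $u^{-1}(\alpha)$ meets $\operatorname{Crit}(f)$. Your two suggestions do not work:
\begin{itemize}
\item ``Openness plus density'' proves nothing: $\mathbb{C}\setminus\{0\}$ is open and contains the complement of a line.
\item A ``properness-type estimate'' is not available: $f$ is not proper, so preimages of a sequence $w_j\to w_0$ need not subconverge.
\end{itemize}
The paper treats this case by chaining integral curves of $\pm\nabla v$ inside $u^{-1}(\alpha)$ around a neighbourhood of the critical components $S_i$. The remaining gaps $\bigl(v(S_i)-\epsilon,v(S_i)+\epsilon\bigr)$ are filled using the open mapping theorem.

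If you want to keep your closedness idea, you have to lift paths rather than points. Note that the set $D$ of critical values is finite: the critical locus is a compact analytic set and $f$ is constant on its components. Take $w_0\notin D$, a closed disk $\overline{B}$ centred at $w_0$ missing $D$, and $w_1\in f(M)\cap\overline{B}$. Flow by $2\operatorname{Re}\bigl((w_0-w_1)\overline{\nabla f}/\|\nabla f\|^2\bigr)$ starting on $f^{-1}(w_1)$. This moves $f$ linearly towards $w_0$. The speed is bounded because $\|\nabla f\|$ is bounded below on $f^{-1}(\overline{B})$: by $C$ off $K$, and by compactness on $K$. Completeness then lets the flow reach $f^{-1}(w_0)$, exactly as in Proposition~\ref{prop:fibration}. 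Done carefully, this shows that $f(M)\setminus D$ is open and closed in the connected set $\mathbb{C}\setminus D$. Together with $D\subset f(M)$, that proves the theorem directly, bypassing both of your steps.
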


\begin{proof}
    Let $u = \operatorname{Re} f$ and $v = \operatorname{Im} f$.
    Then $u$ and $v$ are nonconstant harmonic functions on $M$.

    \medskip
    \noindent\emph{Step 1: $u(M) = \mathbb{R}$.}

    Since $M$ is connected, the image $u(M)$ is an interval.
    To prove $u(M) = \mathbb{R}$, it therefore suffices to show that
    \begin{equation}
        \inf_{p \in M} u(p) = -\infty \quad \text{and} \quad \sup_{p \in M} u(p) = +\infty.
    \end{equation}
    We prove the statement for the infimum; the argument for the supremum is analogous.

    Assume for contradiction that
    \begin{equation}
        \alpha_m := \inf_{p \in M} u(p) > -\infty.
    \end{equation}

    Since $f$ is fundamentally tame, there exist a compact subset $K \subset M$ and a positive constant $C$ such that
    \begin{equation}
        \|\nabla f(p)\| > C \quad \text{for all } p \in M \setminus K.
    \end{equation}
    In particular, the critical locus of $f$ is contained in $K$.

    Let
    \begin{equation}
        \beta_m := \min_{p \in K} u(p).
    \end{equation}

    By the maximum principle, $u$ does not attain its minimum on $M$.
    Hence
    \begin{equation}
        \alpha_m < \beta_m,
    \end{equation}
    and consequently
    \begin{equation}
        (\alpha_m, \beta_m) \subset u(M).
    \end{equation}
    Moreover,
    \begin{equation}
        u^{-1}(\alpha_m, \beta_m) \subset M \setminus K.
    \end{equation}

    Choose $p \in M$ such that $u(p) \in (\alpha_m, \beta_m)$ and consider the negative gradient flow
    \begin{equation}
        \begin{cases}
            \phi'(t) = -\nabla u(\phi(t)),\\
            \phi(0) = p.
        \end{cases}
    \end{equation}

    Let $d$ denote the distance function on $M$.
    For $t \geq 0$ we have $u(\phi(t)) \leq u(p) < \beta_m$, hence $\phi(t) \in M \setminus K$.
    Since $\|\nabla u\| = \frac{1}{2}\|\nabla f\|$, it follows that
    \begin{equation}
        \|\nabla u(\phi(t))\| > \tfrac{1}{2} C.
    \end{equation}

    Since
    \begin{equation}
        u(p) - u(\phi(t)) = \int_0^t -\frac{\mathrm{d}}{\mathrm{d}\tau} u(\phi(\tau)) \, \mathrm{d}\tau = \int_0^t \|\nabla u(\phi(\tau))\|^2 \, \mathrm{d}\tau,
    \end{equation}
    we can estimate
    \begin{equation}\label{eqn:2.11}
        u(p) - u(\phi(t)) \geq \frac{1}{2} C \int_0^t \|\phi'(\tau)\| \, \mathrm{d}\tau \geq \frac{1}{2} C \, d(\phi(t), p),
    \end{equation}
    while on the other hand
    \begin{equation}\label{eqn:2.12}
        u(p) - u(\phi(t)) \geq \frac{1}{4} C^2 t.
    \end{equation}

    Let $[0, T)$ be the maximal forward interval of existence of $\phi(t)$.

    If $T = +\infty$, then by \eqref{eqn:2.12},
    \begin{equation}
        u(p) - u(\phi(t)) \to +\infty \quad \text{as } t \to +\infty,
    \end{equation}
    which contradicts the assumption that $\alpha_m > -\infty$.

    If $T < +\infty$, then by maximality of the interval the curve $\phi(t)$ cannot remain in any compact subset of $M$.
    Since $M$ is complete, this implies
    \begin{equation}
        d(\phi(t), p) \to \infty \quad \text{as } t \to T^-.
    \end{equation}
    By \eqref{eqn:2.11} we again obtain
    \begin{equation}
        u(p) - u(\phi(t)) \to +\infty \quad \text{as } t \to T^-,
    \end{equation}
    which is a contradiction.

    Therefore $\inf_{p \in M} u(p) = -\infty$.
    Applying the same argument to $-u$ gives $\sup_{p \in M} u(p) = +\infty$, and hence
    \begin{equation}
        u(M) = \mathbb{R}.
    \end{equation}

    The argument above in fact yields the following lemma.

    \begin{lemma}\label{lem:integral-curve}
        Let $M$ be a complete manifold.
        If $\gamma : [0, T) \to M$ is a maximal forward integral curve of $-\nabla u$ and $\|\nabla u\| > C$ on $\operatorname{Im} \gamma$ for some $C > 0$, then
        \begin{equation}
            u(\operatorname{Im} \gamma) = (-\infty, u(\gamma(0))].
        \end{equation}
    \end{lemma}

    \medskip
    \noindent\emph{Step 2: the range of $v$ on the level sets of $u$.}
    For every $\alpha \in \mathbb{R}$, we prove that
    \begin{equation}
        v\big(u^{-1}(\alpha)\big) = \mathbb{R}.
    \end{equation}

    Since Step 1 shows $u(M) = \mathbb{R}$, the level set $u^{-1}(\alpha)$ is nonempty.

    Because $f$ is holomorphic, the Cauchy--Riemann equations imply
    \begin{equation}
        \nabla v = J(\nabla u), \qquad \langle \nabla u, \nabla v \rangle = 0,
    \end{equation}
    where $J$ denotes the complex structure.
    Hence along any integral curve $\gamma(t)$ of $\pm \nabla v$,
    \begin{equation}
        \frac{\mathrm{d}}{\mathrm{d}t} u(\gamma(t)) = \langle \nabla u, \gamma'(t) \rangle = \pm \langle \nabla u, \nabla v \rangle = 0.
    \end{equation}
    Therefore every integral curve of $\pm \nabla v$ that starts in $u^{-1}(\alpha)$ remains inside this level set.

    Since the critical locus of $f$ is contained in the compact set $K$, it is a compact complex analytic space.
    Consider its connected components.
    By the maximum principle for holomorphic functions on connected complex analytic spaces \cite[Chapter 5, Section 5]{GrauertRemmert2004}, the function $f$ is constant on each connected component, so that on each such component both $u$ and $v$ are constant.
    Each component is therefore either contained in $u^{-1}(\alpha)$ or disjoint from it.

    Suppose that there are $k$ connected components of the critical locus that are contained in $u^{-1}(\alpha)$.
    If $k = 0$, then $u^{-1}(\alpha)$ contains no critical points of $f$, so we can choose $C > C_1 > 0$ such that $\|\nabla v\| > C_1$ on $M\setminus K$ and on $u^{-1}(\alpha)\cap K$, hence on $u^{-1}(\alpha)$.
    Then by Lemma~\ref{lem:integral-curve}, $v\big(u^{-1}(\alpha)\big) = \mathbb{R}$.
    Otherwise, let $k>0$ and let $S_1, \ldots, S_k$ be the connected components of the critical locus that are contained in $u^{-1}(\alpha)$, arranged so that
    \begin{equation}
        v(S_1) \leq \cdots \leq v(S_k).
    \end{equation}

    Since $f$ is a nonconstant holomorphic function, it is an open map.
    Thus there exists $\epsilon > 0$ such that for each $i$,
    \begin{equation}
        D_\epsilon\big(f(S_i)\big) \subset f(M),
    \end{equation}
    and for $i \neq j$ and $v(S_i) \neq v(S_j)$,
    \begin{equation}
        D_\epsilon\big(f(S_i)\big) \cap D_\epsilon\big(f(S_j)\big) = \emptyset.
    \end{equation}
    Hence
    \begin{equation}
        \big(v(S_i) - \epsilon, \, v(S_i) + \epsilon\big) \subset v\big(u^{-1}(\alpha)\big).
    \end{equation}

    Choose an open neighborhood $U_\alpha \subset M$ containing $S_1, \ldots, S_k$ such that
    \begin{equation}
        f(U_\alpha) \subset \bigcup_{i=1}^{k} D_{\epsilon'}\big(f(S_i)\big)
    \end{equation}
    for some $0 < \epsilon' < \epsilon$.

    Then
    \begin{equation}
        (K \setminus U_\alpha) \cap u^{-1}(\alpha)
    \end{equation}
    is compact, and there exists $\widetilde{C}_\alpha > 0$ such that
    \begin{equation}
        \|\nabla v\| > \widetilde{C}_\alpha \quad \text{on } u^{-1}(\alpha) \setminus U_\alpha.
    \end{equation}

    Choose $p_1 \in u^{-1}(\alpha) \setminus U_\alpha$ such that
    \begin{equation}
        v(p_1) \in \big(v(S_1) - \epsilon, \, v(S_1) - \epsilon'\big).
    \end{equation}

    Consider the maximal forward integral curve $\gamma_0$ of $-\nabla v$ starting at $p_1$.
    Since $v$ strictly decreases along this curve, the curve cannot intersect $U_{\alpha}$.
    Let $\Gamma_0$ be the image of $\gamma_0$.
    By Lemma~\ref{lem:integral-curve},
    \begin{equation}
        v(\Gamma_0) = (-\infty, v(p_1)].
    \end{equation}

    Set $j_1 = 1$.
    Choose $q_1 \in u^{-1}(\alpha) \setminus U_{\alpha}$ such that
    \begin{equation}
        v(q_1) \in \bigl(v(S_{j_1}) + \epsilon', \ v(S_{j_1}) + \epsilon\bigr).
    \end{equation}

    Consider the maximal forward integral curve $\gamma_1$ of $\nabla v$ starting at $q_{j_1}$.
    If $\gamma_1$ does not intersect $U_{\alpha}$, then by Lemma~\ref{lem:integral-curve}
    \begin{equation}
        v(\gamma_1) = [v(q_{j_1}), +\infty),
    \end{equation}
    and the argument terminates.

    Otherwise, there exist $t_1$ and $j_2 > j_1$ such that
    \begin{equation}
        p_{j_2} := \gamma_1(t_1) \in U_{\alpha}, \quad v(p_{j_2}) \in \bigl(v(S_{j_2}) - \epsilon', \ v(S_{j_2}) + \epsilon'\bigr).
    \end{equation}
    Let
    \begin{equation}
        \Gamma_1 = \gamma_1([0, t_1]).
    \end{equation}

    Repeating the same procedure, we obtain indices
    \begin{equation}
        1 = j_1 < j_2 < \cdots < j_m \le k
    \end{equation}
    for some $1 \le m \le k$, points $p_{j_2}, \ldots, p_{j_m} \in U_{\alpha}$ such that
    \begin{equation}
        v(p_{j_i}) \in \bigl(v(S_{j_i}) - \epsilon', \ v(S_{j_i}) + \epsilon'\bigr),
    \end{equation}
    points $q_{j_1}, \ldots, q_{j_m} \in u^{-1}(\alpha) \setminus U_{\alpha}$ such that
    \begin{equation}
        v(q_{j_i}) \in \bigl(v(S_{j_i}) + \epsilon', \ v(S_{j_i}) + \epsilon\bigr),
    \end{equation}
    and curve segments $\Gamma_1, \ldots, \Gamma_m$, where $\Gamma_k$ connects $q_{j_k}$ to $p_{j_{k+1}}$ for $k = 1, \ldots, m-1$ and $\Gamma_m$ is the image of a maximal forward integral curve of $\nabla v$.
    By Lemma~\ref{lem:integral-curve},
    \begin{equation}
        v(\Gamma_m) = [v(q_{j_m}), +\infty).
    \end{equation}

    The sets
    \begin{equation}
        v(\Gamma_0), \ v(\Gamma_1), \ldots, \ v(\Gamma_m)
    \end{equation}
    are pairwise disjoint closed intervals whose complement in $\mathbb{R}$ is
    \begin{equation}
        \bigl(v(p_{j_1}), v(q_{j_1})\bigr) \cup \cdots \cup \bigl(v(p_{j_m}), v(q_{j_m})\bigr).
    \end{equation}

    Each of these complementary intervals is contained in one of the open intervals
    \begin{equation}
        \bigl(v(S_i) - \epsilon, \ v(S_i) + \epsilon\bigr),
    \end{equation}
    which in turn lie in $v\bigl(u^{-1}(\alpha)\bigr)$.
    Therefore
    \begin{equation}
        v\bigl(u^{-1}(\alpha)\bigr) = \mathbb{R}.
    \end{equation}

    Hence $f(M) = \mathbb{C}$.
\end{proof}

As an application of the range theorem, we record the following corollary concerning holomorphic embeddings and polynomial superpotentials.

\begin{corollary}\label{cor:embedding}
    Let $(M,f)$ be a K\"ahlerian LG model, and let $\psi: N\hookrightarrow M$ be a holomorphic embedding.
    If the induced K\"ahlerian LG model $(N, \psi^* f)$ is fundamentally tame, then $f(M)=\mathbb{C}$.
    In particular, the following two cases hold:
    \begin{enumerate}
        \item If $f$ is a nonconstant polynomial defined on the Euclidean space $\mathbb{C}^N$, then $f(\mathbb{C}^N)=\mathbb{C}$.
        \item If $f$ is a Laurent polynomial defined on $\bigl((\mathbb{C}^*)^N, \sum_{i=1}^N |dz_i|^2/|z_i|^2\bigr)$ having one monomial with positive degree and one monomial with negative degree, then $f\bigl((\mathbb{C}^*)^N\bigr)=\mathbb{C}$.
    \end{enumerate}
\end{corollary}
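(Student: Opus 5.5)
The main assertion follows at once from Theorem~\ref{thm:range} applied to the pair $(N,\psi^*f)$. I will treat $N$ as carrying the induced K\"ahler structure; the corollary's hypothesis that $(N,\psi^*f)$ is a fundamentally tame K\"ahlerian LG model is understood to include that $N$ is complete. Theorem~\ref{thm:range} then gives $(\psi^*f)(N)=f(\psi(N))=\mathbb{C}$. Since $f(\psi(N))\subset f(M)$, we get $f(M)=\mathbb{C}$. The real work is in the two special cases, where one has to produce a suitable embedded $N$ on which the restriction is fundamentally tame.

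For case (1), I will restrict to a complex line. For a nonconstant polynomial $f$ on $\mathbb{C}^N$, pick a point $a$ and a direction $b$ such that $g(s)=f(a+sb)$ is a nonconstant one-variable polynomial. This is possible because $f$ is nonconstant: taking $b$ with $f_d(b)\neq 0$, where $f_d$ is the top homogeneous part of degree $d\ge 1$, makes $g$ of exact degree $d$. The line $\psi(s)=a+sb$ is a holomorphic embedding of $\mathbb{C}$ with the flat (hence complete) metric. We have $|g'(s)|\sim d\,|f_d(b)|\,|s|^{d-1}$ as $|s|\to\infty$. If $d\ge 2$ this tends to infinity; if $d=1$ it is a nonzero constant. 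In either case $|g'|>C$ outside a compact disc, so $(\mathbb{C},g)$ is fundamentally tame.

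For case (2), I will restrict to a one-parameter subgroup. Write $f=\sum_{m\in A} c_m z^m$. Choose a cocharacter $w\in\mathbb{Z}^N$ for which the pairing $\langle m,w\rangle$ attains its maximum over $A$ at a unique $m_+$ with $\langle m_+,w\rangle>0$, and its minimum at a unique $m_-$ with $\langle m_-,w\rangle<0$. The hypothesis that there are monomials of positive and of negative degree is meant to guarantee this; by a generic perturbation one may take $w$ close to $(1,\dots,1)$ so that the maximum and minimum are unique. Then restrict along $\psi(\tau)=(e^{a_1}\tau^{w_1},\dots,e^{a_N}\tau^{w_N})$ with $w$ primitive, so that $\psi$ is an embedding $\mathbb{C}^*\hookrightarrow(\mathbb{C}^*)^N$. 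The induced metric is $|w|^2|d\tau|^2/|\tau|^2$, which is complete, being the flat cylinder metric in $\log\tau$. In this metric the pullback $g(\tau)=\sum_m c_m e^{\langle m,a\rangle}\tau^{\langle m,w\rangle}$ has gradient norm comparable to $|\tau g'(\tau)|$. As $|\tau|\to\infty$ this is dominated by $\langle m_+,w\rangle\,c_{m_+}\tau^{\langle m_+,w\rangle}$, whose modulus tends to infinity. As $|\tau|\to 0$ it is dominated by $\langle m_-,w\rangle\,c_{m_-}\tau^{\langle m_-,w\rangle}$, which also blows up. Hence $|\nabla g|>C$ outside a compact annulus, and the general statement applies.

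The main obstacle is case (2): interpreting the hypothesis ``positive and negative degree'' precisely enough to find a cocharacter $w$ with unique extremal monomials of opposite signs. I would first try $w=(1,\dots,1)$ perturbed generically, using that generic perturbation makes the extremal exponents unique while preserving the signs of the extreme values.
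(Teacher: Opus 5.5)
Your proof is correct. For the main assertion it is the paper's argument exactly; the difference lies in the two special cases.

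The paper never uses fundamental tameness for (1) and (2).
\begin{itemize}
\item For (1) it restricts $f$ to a line $z\mapsto f(zv)$ through the origin and quotes the fundamental theorem of algebra.
\item For (2) it restricts to $h(z)=f(\alpha_1 z,\dots,\alpha_N z)$ with $\alpha$ generic, so that no total-degree coefficient $\sum_{|m|=k}c_m\alpha^m$ vanishes. It then notes that $z^r(h(z)-c)$ is a nonconstant polynomial with nonzero constant term, hence has a root in $\mathbb{C}^*$.
\end{itemize}

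You instead verify that the restriction to a complete curve is fundamentally tame and then apply Theorem~\ref{thm:range}. The curve is a flat line in case (1), and in case (2) a one-parameter subgroup with its cylinder metric $|w|^2|d\tau|^2/|\tau|^2$. Your route makes the special cases genuine instances of the first assertion, which is what ``in particular'' suggests. The paper's route is purely algebraic and shorter. Yours needs extra bookkeeping: primitivity of $w$ for the embedding, completeness of the induced metric, and growth of $|\tau g'(\tau)|$ at both ends.

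Your device for avoiding cancellation (a cocharacter with unique extremal exponents) and the paper's (generic coefficients $\alpha$ along the diagonal cocharacter) are interchangeable. Either one would feed into either conclusion.

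Two small points of precision. An integer vector cannot literally be ``close to'' $(1,\dots,1)$. Take instead $w=K(1,\dots,1)+\eta$, with $\eta\in\mathbb{Z}^N$ separating the finitely many exponents and $K\gg0$, then divide by $\gcd(w_i)$. Also, when $d=1$ you only get $|g'|$ bounded below. This matches the form of fundamental tameness the paper actually uses, and that case is trivial anyway.
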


\begin{proof}
    Since $(N, \psi^* f)$ is fundamentally tame, the range theorem (Theorem~\ref{thm:range}) applied to this LG model gives $\psi^*f(N)=\mathbb{C}$, and hence $f(M)=\mathbb{C}$.
    For the polynomial case, choose $v\in\mathbb{C}^N$ such that $z\mapsto f(zv)$ is nonconstant.
    This one-variable polynomial is surjective by the fundamental theorem of algebra, so $f(\mathbb{C}^N)=\mathbb{C}$.
    For the Laurent case, choose $\alpha\in(\mathbb{C}^*)^N$ generically so that $h(z)=f(\alpha_1z,\ldots,\alpha_Nz)$ retains a positive and a negative power of $z$.
    Such a choice is possible because the coefficient of each total degree is a nonzero Laurent polynomial in $\alpha$.
    If $-r<0$ is the lowest exponent of $h$, then for every $c\in\mathbb{C}$, $z^r(h(z)-c)$ is a nonconstant polynomial with nonzero constant term.
    It therefore has a nonzero root, which gives $h(z)=c$ and proves the second case.
\end{proof}

The following class of quiver superpotentials gives examples satisfying the hypothesis of Corollary~\ref{cor:embedding}~(2).

\begin{example}\label{exa:quiver}
    Let $Q=(Q_0,Q_1)$ be a finite directed quiver with $|Q_0|\ge2$, vertex set $Q_0$, and arrow set $Q_1$.
    Attach a nonzero variable $y_v\in\mathbb{C}^*$ to every vertex $v\in Q_0$, and to an arrow $a:u\to v$ associate the character
    \begin{equation*}
        \chi_a=\frac{y_v}{y_u}.
    \end{equation*}
    The associated quiver superpotential is the Laurent polynomial
    \begin{equation*}
        W_Q=\sum_{a\in Q_1}c_a\,\chi_a
        =\sum_{a\in Q_1}c_a\,\frac{y_{h(a)}}{y_{t(a)}},
        \qquad c_a\in\mathbb{C}^*,
    \end{equation*}
    where $t(a)$ and $h(a)$ denote the tail and the head of the arrow $a$.
    Since the simultaneous rescaling $y_v\mapsto\lambda y_v$ leaves every ratio $y_{h(a)}/y_{t(a)}$ invariant, the natural habitat of $W_Q$ is the algebraic torus
    \begin{equation*}
        T_Q=(\mathbb{C}^*)^{Q_0}/\mathbb{C}^*\simeq(\mathbb{C}^*)^{|Q_0|-1},
    \end{equation*}
    where we choose coordinates by fixing a base vertex $v_0\in Q_0$ and setting $y_{v_0}=1$.
    We equip $T_Q$ with the standard logarithmic metric $\sum_i |dy_i|^2/|y_i|^2$.

    The exponent of the arrow $a:u\to v$ is the vector $m_a=e_v-e_u$ in the character lattice $\{m\in\mathbb{Z}^{Q_0}:\sum_{v\in Q_0}m_v=0\}$.
    After combining equal characters, the Newton polytope of $W_Q$ is
    \begin{equation*}
        \Delta_Q=\operatorname{Conv}\{m:c_m\ne0\},\qquad c_m=\sum_{a\in Q_1:\,m_a=m}c_a.
    \end{equation*}
    Assume that $\Delta_Q$ is full-dimensional and that $0\in\operatorname{Int}(\Delta_Q)$.
    Then the actual support of $W_Q$ contains exponents of both positive and negative total degree in the chosen torus coordinates, so the hypothesis of Corollary~\ref{cor:embedding}~(2) is satisfied, and we conclude that
    \begin{equation*}
        W_Q(T_Q)=\mathbb{C}.
    \end{equation*}

    For $n\ge2$ and $q\in\mathbb{C}^*$, a concrete instance is the Hori--Vafa mirror of $X=\mathbb{P}^{n-1}$, namely
    \begin{equation*}
        W=x_1+\cdots+x_{n-1}+\frac{q}{x_1\cdots x_{n-1}}
        \qquad\text{on }(\mathbb{C}^*)^{n-1},
    \end{equation*}
    whose monomials $x_i$ have positive degree and whose monomial $\frac{q}{x_1\cdots x_{n-1}}$ has negative degree; hence $W\bigl((\mathbb{C}^*)^{n-1}\bigr)=\mathbb{C}$ by Corollary~\ref{cor:embedding}~(2).
\end{example}

\section{Fibration Theorem}\label{sec:fibration}

In this section we establish a fibration theorem for the superpotential $f$.
It will be used both in the computation of relative homology and in the comparison between fibers and sublevel sets.

\begin{proposition}\label{prop:fibration}
    Let $(M,f)$ be a fundamentally tame LG model.
    Denote by $D$ the set of critical values of $f$.
    Then
    \begin{equation}
        f|_{f^{-1}(f(M)\setminus D)} : f^{-1}(f(M) \setminus D) \to f(M) \setminus D
    \end{equation}
    is a locally trivial fibration.
\end{proposition}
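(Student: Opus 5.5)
Fix a regular value $c_0\in f(M)\setminus D$. Since $D$ is the image of the critical locus, which lies in the compact set $K$, $D$ is compact (indeed $f(\operatorname{Crit} f)$ is compact). So there is a closed disk $\overline{B}=\overline{D_r(c_0)}$ disjoint from $D$. By Theorem~\ref{thm:range}, $\overline{B}\subset f(M)$. I will trivialize $f$ over $B$ by Ehresmann's method, with explicit lifts of the coordinate vector fields $\partial_x,\partial_y$ of $\mathbb{C}=\mathbb{R}^2$, and use tameness and completeness in place of properness.

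\textbf{Step 1: the lifts.} On the open set $U=f^{-1}(\mathbb{C}\setminus D)$, $\nabla f$ has no zeros. Write $u=\operatorname{Re}f$ and $v=\operatorname{Im}f$. Then $\nabla u$ and $\nabla v=J\nabla u$ are orthogonal and of equal length $\tfrac12\|\nabla f\|$. Define
\begin{equation*}
X=\frac{\nabla u}{\|\nabla u\|^2},\qquad Y=\frac{\nabla v}{\|\nabla v\|^2}.
\end{equation*}
These satisfy $df(X)=1$ and $df(Y)=i$, so $f$ maps their flows to translations. Because $\langle\nabla u,\nabla v\rangle=0$, we have $X(v)=0$ and $Y(u)=0$. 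The flows do not commute in general, so I avoid needing commutativity: for $p$ in the fiber $f^{-1}(c_0)$ and $w=s+it\in B$, I set $\Phi(p,w)=\psi^Y_t(\psi^X_s(p))$, flowing first along $X$ and then along $Y$. Its inverse $f^{-1}(B)\to f^{-1}(c_0)\times B$ is $q\mapsto\bigl(\psi^X_{-s}\psi^Y_{-t}(q),f(q)\bigr)$, where $f(q)-c_0=s+it$. Both maps are smooth wherever the flows exist, and they are mutually inverse. This gives the local trivialization.

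\textbf{Step 2: existence of the flows (main obstacle).} I must show that integral curves of $X$ starting in $f^{-1}(B)$ exist for as long as $f$ stays in $B$, and likewise for $Y$. Along such a curve, $f(\gamma(\tau))=f(\gamma(0))+\tau$, so $f$ remains in the compact set $\overline{B}$. The speed is $\|X\|=1/\|\nabla u\|=2/\|\nabla f\|$. On $M\setminus K$ this is less than $2/C$. On $K\cap f^{-1}(\overline{B})$, which is compact and free of critical points, $\|\nabla f\|$ has a positive lower bound. Hence $\|X\|\le L$ on all of $f^{-1}(\overline{B})$. A maximal curve defined on $[0,T)$ with $T$ finite and image in $f^{-1}(\overline{B})$ then has length at most $LT$, so it stays in a closed metric ball. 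By completeness (Hopf--Rinow) that ball is compact, which contradicts maximality. The same argument applies to $Y$ and to backward time. So the flows exist for all parameters needed, and in Step 1 the image of the $X$-flow lands in $f^{-1}(c_0+s)\subset f^{-1}(B)$ before we apply the $Y$-flow.

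\textbf{Step 3: conclusion.} Since $c_0$ was arbitrary, $f$ is a locally trivial fibration over $f(M)\setminus D$, which equals $\mathbb{C}\setminus D$ by Theorem~\ref{thm:range}. The substantive point is the uniform speed bound in Step 2. It combines the tameness bound outside $K$ with compactness of $K\cap f^{-1}(\overline{B})$, and it is what replaces the properness used in the classical Ehresmann theorem.
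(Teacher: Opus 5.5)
Your proof is correct and is essentially the paper's argument. Your fields $X$ and $Y$ are exactly $2\operatorname{Re}\tilde X$ and $2\operatorname{Re}(i\tilde X)$ for the paper's lift $\tilde X=\overline{\nabla f}/\|\nabla f\|^2$, and the key step is the same in both: a uniform lower bound for $\|\nabla f\|$ on the preimage of a compact set of regular values (tameness outside $K$ plus compactness of the part inside $K$), combined with completeness of $M$. The remaining differences are cosmetic: the paper multiplies by a cutoff $\mu$ so the flow is globally complete and transports along straight segments in a convex set using $2\operatorname{Re}((c_0-c)\tilde X)=aX+bY$ with $c_0-c=a+ib$, whereas you work on $f^{-1}(\mathbb{C}\setminus D)$ with an escape-lemma argument and an L-shaped path in a disk (and in $\Phi(p,w)$ you mean $w-c_0=s+it$).
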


The argument follows the strategy of the Ehresmann fibration theorem and is inspired by \cite[Lemma 4.3]{Broughton1988}.
Since $f$ is a submersion over $f(M) \setminus D$, a trivialization is obtained by lifting a vector field from the base and constructing an isotopy between fibers along its flow.
As $f$ is not assumed to be proper, completeness of the lifted flow is not automatic; we therefore construct the lifted vector field explicitly, and use the fundamentally tame condition together with the completeness of $M$ to ensure that its flow exists globally.

\begin{proof}
    Let $U \subset f(M)$ be a convex subset with $\overline{U} \subset f(M) \setminus D$.
    Since $f(M) \setminus D$ is open in $\mathbb{C}$, we may choose an open neighborhood $V$ of $\overline{U}$ with $\overline{V} \subset f(M) \setminus D$.
    Then $f$ is a submersion on $f^{-1}(\overline{V})$.

    We look for a vector field $\tilde{X}$ on $f^{-1}(V)$ lifting the constant vector field on the base, that is, satisfying
    \begin{equation}
        \mathrm{d}f(\tilde{X}) = 1.
    \end{equation}
    Since $f$ is holomorphic, $\partial f$ is a $(1,0)$-form, and its gradient
    \begin{equation}
        \nabla f = g^{i\bar{j}} \frac{\partial f}{\partial z^i} \frac{\partial}{\partial \bar{z}^j}
    \end{equation}
    is a $(0,1)$-vector field.
    Define
    \begin{equation}
        \tilde{X} = \frac{1}{\|\nabla f\|^2} \overline{\nabla f}.
    \end{equation}
    Then $\tilde{X}$ is a smooth $(1,0)$-vector field, and a direct computation gives
    \begin{equation}
        \tilde{X} f = \frac{1}{\|\nabla f\|^2} \overline{\nabla f}(f) = \frac{1}{\|\nabla f\|^2} \|\nabla f\|^2 = 1.
    \end{equation}
    Hence $\partial f(\tilde{X}) = 1$, so that $\tilde{X}$ lifts the constant unit vector field on the base.

    Because $f$ is fundamentally tame and has no critical point on $f^{-1}(\overline{V})$, the function $\|\nabla f\|$ admits a positive lower bound on $f^{-1}(\overline{V})$.
    In particular $\tilde{X}$ is bounded.

    Choose a smooth function $\mu$ on $M$ such that
    \begin{equation}
        \mu \equiv 1 \ \text{ on } \ f^{-1}(U), \qquad \mu \equiv 0 \ \text{ on } \ M \setminus f^{-1}(V), \qquad 0 \le \mu \le 1,
    \end{equation}
    and define a global smooth vector field on $M$ by
    \begin{equation}
        X(p) =
        \begin{cases}
            \mu(p)\tilde{X}(p), & p \in f^{-1}(V), \\
            0, & p \notin f^{-1}(V).
        \end{cases}
    \end{equation}
    Then $X$ is bounded.

    Fix $c_0 \in U$ and, for each $c \in U$, set
    \begin{equation}
        X_c = (c_0 - c)X,
    \end{equation}
    denoting the global flow of $2 \operatorname{Re} X_c$ by $\phi_c(p,t)$.
    Since $X_c$ is bounded and $M$ is complete, this flow is complete.

    Then for $p \in f^{-1}(c)$ we have
    \begin{equation}
        \frac{\mathrm{d}}{\mathrm{d}t} f(\phi_c(p,t)) = (c_0 - c)\mu(\phi_c(p,t)).
    \end{equation}
    Integrating yields
    \begin{equation}
        f(\phi_c(p,t)) = c + \int_0^t (c_0 - c)\mu(\phi_c(p,s)) \, \mathrm{d}s.
    \end{equation}

    Since $0 \le \mu \le 1$, for $t \in [0,1]$ we may write
    \begin{equation}
        f(\phi_c(p,t)) = (1 - \theta(t))c + \theta(t)c_0
    \end{equation}
    for some $\theta(t) \in [0,1]$; hence $f(\phi_c(p,t))$ lies on the segment joining $c$ and $c_0$.
    Because $U$ is convex, this segment is contained in $U$, so $\mu = 1$ along the trajectory for $t \in [0,1]$.
    Therefore
    \begin{equation}
        f(\phi_c(p,t)) = (1 - t)c + tc_0.
    \end{equation}

    It follows that
    \begin{equation}
        \phi_c(f^{-1}(c), t) \subset f^{-1}\bigl((1 - t)c + tc_0\bigr).
    \end{equation}
    Since the flow is invertible, with inverse given by $t \mapsto -t$, the reverse inclusion also holds.
    Hence
    \begin{equation}
        \phi_c(f^{-1}(c), t) = f^{-1}\bigl((1 - t)c + tc_0\bigr),
    \end{equation}
    and the induced map
    \begin{equation}
        \tilde{\phi}_{c,t} : f^{-1}(c) \to f^{-1}\bigl((1 - t)c + tc_0\bigr)
    \end{equation}
    is a diffeomorphism.

    Define
    \begin{equation}
        H(p,t) = \phi_{f(p)}(p,t), \qquad p \in f^{-1}(U), \ t \in [0,1].
    \end{equation}
    Then
    \begin{equation}
        \frac{\partial H}{\partial t} = 2\operatorname{Re}((c_0 - f(p))X)(H(p,t)), \qquad H(p,0) = p.
    \end{equation}
    Thus $H$ is smooth and
    \begin{equation}
        H(f^{-1}(U), 1) = f^{-1}(c_0).
    \end{equation}
    Hence $H$ defines a deformation retraction of $f^{-1}(U)$ onto $f^{-1}(c_0)$.

    Finally define
    \begin{equation}
        \tilde{H} : f^{-1}(U) \to f^{-1}(c_0) \times U, \qquad \tilde{H}(p) = (H(p,1), f(p)).
    \end{equation}
    Its inverse is obtained by flowing backwards along the corresponding segment in $U$.
    Therefore $\tilde{H}$ is a diffeomorphism, and $f|_{f^{-1}(U)}$ is a trivial fibration over $U$.
\end{proof}

\section{Relative Homology of Smooth Fibers}\label{sec:relative}

In this section we compute the relative homology of the regular fibers of $f$.
The resulting description of $H_*(M, f^{-1}(c))$ will be used in the next section to construct the relative cohomology bundle over the deformation parameter space.

\begin{theorem}\label{thm:main}
    Let $(M,f)$ be a fundamentally tame LG model.
    Assume that $f$ has finitely many critical points and set $\mu = \mu_f$.
    Then for any regular value $c$ of $f$ the relative homology groups $H_*(M, f^{-1}(c))$ are concentrated in degree $n$, and
    \begin{equation}
        H_n(M, f^{-1}(c)) \cong \mathbb{Z}^{\mu}.
    \end{equation}
\end{theorem}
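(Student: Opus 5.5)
Following the outline in the introduction, the plan is a Morse-theoretic argument for the nonnegative function $F = |f - c|^2$ on $M$. Its zero locus is exactly the smooth hypersurface $f^{-1}(c)$. Away from that locus, the critical points of $F$ are exactly the critical points of $f$, because $dF = \overline{(f-c)}\,df + (f-c)\,\overline{df}$ and $df$, $\overline{df}$ are linearly independent over $\mathbb{C}$ wherever $df\neq 0$. These critical points are finitely many and lie in the compact set $K$.

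The critical points of $f$ need not be nondegenerate, so the first step is a Morsification. Choose holomorphic functions $g$ (for instance, local coordinate functions cut off and corrected, or globally defined functions if $M$ is Stein) and form $f_a = f + a g$ with $a$ small and $g$ supported in a neighbourhood of $K$. The requirements are:
\begin{itemize}
\item each degenerate critical point $p$ of $f$ splits into exactly $\mu_f(p)$ nondegenerate critical points nearby, by the conservation of the local Milnor number;
\item no new critical points are created in $M\setminus K$, since $\|\nabla f\|>C$ there and the perturbation is small;
\item $f_a = f$ outside a compact set, so $f_a$ is still fundamentally tame and $f_a^{-1}(c)$ is isotopic to $f^{-1}(c)$ by Proposition~\ref{prop:fibration} applied along the deformation.
\end{itemize}
The cut-off destroys holomorphicity, so I would perform it only in a region where $\|\nabla f\|$ is bounded below. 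Alternatively, I would keep everything holomorphic and work only locally with Morse charts of $|f-c|^2$, using the local holomorphic Morsification inside small Milnor balls. This gives $\mu=\sum_p\mu_f(p)$ nondegenerate critical points $p_i$ of $f_a$, with critical values $c_i\neq c$. At each $p_i$, write $f_a - c = (c_i - c) + \sum_j z_j^2$ in holomorphic Morse coordinates. Then $|f_a-c|^2$ has real Hessian equal to $2\,\mathrm{Re}\bigl(\overline{(c_i-c)}\sum_j z_j^2\bigr)$, which is a nondegenerate real quadratic form of index exactly $n$.

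The second step is the Morse theory itself. Let $N_\delta = \{F_a \le \delta\}$ for small $\delta>0$. By Proposition~\ref{prop:fibration} (local triviality near the regular value $c$), $N_\delta$ is a tubular neighbourhood $f_a^{-1}(\bar D_\delta(c))\cong f^{-1}(c)\times \bar D$, so it deformation retracts to $f^{-1}(c)$. As $t$ increases past each value $|c_i-c|^2$, the sublevel set $\{F_a\le t\}$ changes by attaching an $n$-cell, and between critical values the gradient flow of $F_a$ gives deformation retractions. The key point is that these flows are complete even though $M$ is noncompact and $F_a$ is not proper. Here I would use the argument of Lemma~\ref{lem:integral-curve}: on $\{F_a\ge\delta\}$ away from $K$ we have
\[
\|\nabla F_a\| = 2|f_a-c|\,\|\nabla f_a\| \ge 2\sqrt{\delta}\,C,
\]
so the normalized flow $-\nabla F_a/\|\nabla F_a\|^2$ is bounded. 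By completeness of $M$ it exists for the time needed to push $\{F_a\le t'\}$ down to $\{F_a\le t\}$. Beyond the largest critical value, the same flow retracts $M$ onto $\{F_a\le T\}$, again via the $-\nabla F_a$ bound together with completeness. This is exactly the compactness substitute. I would also need the flow to move only in the $F_a$ direction while staying on trajectories, which is straightforward.

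Putting these together, $M$ is homotopy equivalent to $N_\delta$ with $\mu$ cells of dimension $n$ attached. Hence $H_*(M,f^{-1}(c))\cong H_*(M,N_\delta)$ is the cellular homology of a complex with only $n$-cells relative to $N_\delta$, which is $\mathbb{Z}^\mu$ in degree $n$ and zero otherwise.

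The main obstacle will be the completeness of the flow between levels, near the region where $F_a$ is large but $M$ runs off to infinity: a trajectory might escape in finite flow time without $F_a$ reaching the target level. I plan to rule this out with the length estimate of \eqref{eqn:2.11}, in the form $\bigl|F_a(\gamma(t))-F_a(\gamma(0))\bigr|\ge \text{const}\cdot\text{length}$. A secondary difficulty is constructing the Morsification while preserving the tameness bound; if a global holomorphic perturbation is unavailable, I would instead compute the local contribution of each critical point directly as the homotopy type $\vee S^n$ with $\mu_f(p)$ spheres, using Milnor's theorem.
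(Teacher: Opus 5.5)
Your proposal is correct and follows essentially the same route as the paper. The paper likewise Morsifies with cut-off linear perturbations near each critical point, using a $C^1$-smallness estimate so that $F_a=|f_a-c|^2$ has no critical points in the transition annuli where $f_a$ is no longer holomorphic. It then checks that the Hessians of $F_a$ have index $n$, retracts $\{F_a\le\delta\}$ onto $f^{-1}(c)$ via Proposition~\ref{prop:fibration}, and runs Milnor's cell-attachment theorems with completeness plus tameness of $F_a$ off $\Delta$ in place of compactness. Your explicit length-estimate argument for completeness of the normalized flow, including the retraction of $M$ onto a sublevel set above the top critical value, spells out details the paper delegates to Broughton.
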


We shall use the following lemma, which is a direct consequence of Lemma B.3 in \cite{milnor1968singular} applied to $\partial f$.

\begin{lemma}\label{lem:perturbation}
    Let $V \subset \mathbb{C}^n$ be an open neighborhood of the origin.
    Let $f : V \to \mathbb{C}$ be a holomorphic function such that the origin is the only critical point of $f$ in $V$.
    Let $U \Subset V$ be an open neighborhood of the origin.
    Then there exists $\epsilon > 0$ such that for almost all tuples $a = (a_1, \ldots, a_n) \in \mathbb{C}^n$ satisfying $\|a\| < \epsilon$, the perturbed function
    \begin{equation}
        \tilde{f}(z) = f(z) + a_1 z_1 + \cdots + a_n z_n
    \end{equation}
    is a Morse function in $U$.
    Moreover, the number of critical points of $\tilde{f}$ equals the Milnor number $\mu_f$.
\end{lemma}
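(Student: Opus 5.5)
The plan is to reduce everything to the holomorphic gradient map and then invoke Milnor's degree-theoretic results from \cite[Appendix B]{milnor1968singular}. Set
\begin{equation*}
    g = \Bigl(\frac{\partial f}{\partial z_1}, \ldots, \frac{\partial f}{\partial z_n}\Bigr) : V \to \mathbb{C}^n .
\end{equation*}
This map is holomorphic, and by hypothesis $g^{-1}(0)\cap V = \{0\}$. For $\tilde f = f + \sum_i a_i z_i$ we have $\partial \tilde f = g + a$, so the critical points of $\tilde f$ are exactly the solutions of $g(z) = -a$. Moreover, the complex Hessian of $\tilde f$ equals the complex Jacobian of $g$. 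Hence $\tilde f$ is Morse on $U$ precisely when $-a$ is a regular value of $g|_U$.

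\emph{Step 1: localization.} Choose a closed ball $B$ centered at the origin with $B \subset U$. The set $\overline{U}\setminus \operatorname{Int} B$ is compact and contains no zero of $g$, so there is $\delta>0$ with $\|g\| \ge \delta$ on it. Take $\epsilon = \delta$. Then for every $\|a\|<\epsilon$, all critical points of $\tilde f$ in $\overline U$ lie in $\operatorname{Int} B$. In particular, counting critical points in $U$ is the same as counting preimages of $-a$ under $g$ in $B$.

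\emph{Step 2: genericity.} By Sard's theorem the critical values of $g$ have measure zero in $\mathbb{C}^n$. Therefore for almost every $a$ with $\|a\|<\epsilon$ the point $-a$ is a regular value, and then $\tilde f$ is Morse on $U$. Each preimage is isolated, and by compactness of $B$ and Step 1 there are only finitely many of them.

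\emph{Step 3: the count equals $\mu_f$.} At a regular preimage the real Jacobian determinant of $g$ is $|\det(\partial g_i/\partial z_j)|^2 > 0$, so every preimage contributes $+1$ to the local degree. Consider the map $g/\|g\| : \partial B \to S^{2n-1}$. Along the straight-line homotopy from $g$ to $g+a$ we have $\|g+ta\| \ge \delta - \|a\| > 0$ on $\partial B$ for $t\in[0,1]$, so the degree of this map is unchanged, and it equals the number of solutions of $g = -a$ in $B$. Milnor's Lemma B.3, together with his identification of this degree with $\dim_{\mathbb{C}} \mathcal{O}_0/(\partial f/\partial z_1,\ldots,\partial f/\partial z_n)$, shows that the degree is the Milnor number $\mu_f$. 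This yields exactly $\mu_f$ nondegenerate critical points of $\tilde f$ in $U$.

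The main obstacle is Step 3, namely the equality between the topological degree of $g/\|g\|$ and the algebraic multiplicity $\mu_f$. This is the nontrivial content of Milnor's appendix, and I would cite it rather than reprove it. What remains to check carefully is that the uniform choice of $\epsilon$ from Step 1 keeps every critical point inside $B$ and keeps the homotopy in Step 3 nonvanishing on $\partial B$.
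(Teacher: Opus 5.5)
Your proposal is correct and takes the same route as the paper, which obtains the lemma in one line from Milnor's Lemma B.3 applied to the holomorphic gradient map $\partial f$. Your Steps 1--3 spell out what that citation covers: the uniform lower bound for $\|\partial f\|$ on $\overline{U}\setminus\operatorname{Int}B$, Sard's theorem, and counting preimages of $-a$ via the degree of $\partial f/\|\partial f\|$ on $\partial B$, with each regular zero contributing $+1$. Along the way you correctly read the count of critical points as a count inside $U$, which is how the statement must be understood.
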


\begin{proof}[Proof of Theorem~\ref{thm:main}]
    We follow the procedure in Section 4 of \cite{Broughton1988} to construct a CW approximation for $(M, f^{-1}(c))$.

    \medskip
    \noindent\emph{Step 1.}
    Let $p_1, \ldots, p_k$ be all critical points of $f$.
    Choose pairwise disjoint coordinate charts $(V_i, (z^i_1, \ldots, z^i_n))$ centered at $p_i$ such that $\overline{V_i}$ is compact and $\overline{V_i} \cap f^{-1}(c) = \varnothing$, and open neighborhoods $U_i$ of $p_i$ such that $\overline{U_i} \subset V_i$.
    Choose smooth functions $\phi^i$ supported in $V_i$ such that $\phi^i = 1$ on $U_i$.
    For each $i$, let $a^i = (a^i_1, \ldots, a^i_n) \in \mathbb{C}^n$ be a tuple of complex numbers.
    On $V_i$ define
    \begin{equation}
        Z_i = (z^i_1 \phi^i, \ldots, z^i_n \phi^i),
    \end{equation}
    and
    \begin{equation}
        A^i_{a^i} = \biggl( \sum_{j=1}^{n} a^i_j z^i_j \biggr) \phi^i = \langle a^i, Z_i \rangle, \qquad A_a = \sum_{i=1}^{k} A^i_{a^i},
    \end{equation}
    where each $A^i_{a^i}$ is understood to be zero outside $V_i$.
    We then define the perturbed function $f_a$ by
    \begin{equation}
        f_a = f + A_a.
    \end{equation}

    Define
    \begin{equation}
        F_a = |f_a - c|^2, \qquad F = |f - c|^2.
    \end{equation}
    Clearly $F_a$ coincides with $F$ outside $\bigcup_{i=1}^{k} V_i$.
    Let
    \begin{equation*}
        \eta = \min_{\bigcup_i \overline{V_i}} |f - c| > 0.
    \end{equation*}
    Choose $a$ sufficiently small that $\|A_a\|_\infty < \eta/2$.
    Then $F_a \neq 0$ on $\bigcup_{i=1}^{k} \overline{V_i}$.
    Consequently, $F_a^{-1}(0) = F^{-1}(0) = f^{-1}(c)$.
    Let $\Delta$ be the open neighborhood of $f^{-1}(c)$ defined by $F_a < \delta^2$, where $0 < \delta < \eta/2$.
    Then $\overline{\Delta} \subset M \setminus \bigcup_{i=1}^{k} \overline{V_i}$.

    We first prove that $F_a$ is fundamentally tame on $M \setminus \Delta$ and that, if each $\|a^i\|$ is sufficiently small, $F_a$ has no critical point on $M \setminus (\Delta \cup \bigcup_{i=1}^{k} U_i)$.
    Since $F$ is real-valued, $\mathrm{d}F = 0$ is equivalent to $\partial F = 0$, and we compute
    \begin{equation}
        \partial F = \overline{(f-c)}\,\partial f.
    \end{equation}
    Since $f$ is fundamentally tame, there exists a compact subset $K \subset M$ and a positive constant $C$ such that $\|\partial f\| = \|\nabla f\| > C$ on $M \setminus K$; we may choose $K$ so that $\bigcup_{i=1}^{k} \overline{V_i} \subset K$.
    Then on the set $M \setminus (K \cup \Delta)$ we have
    \begin{equation}
        \|\partial F\| = |f-c|\,\|\partial f\| > \delta C.
    \end{equation}
    Since $F_a$ coincides with $F$ outside $K$, the inequality above holds for $\|\partial F_a\|$ as well; hence $F_a$ is fundamentally tame on $M \setminus \Delta$.

    We now consider the behavior of $F_a$ on $M \setminus \bigcup_{i=1}^{k} U_i$.
    Since $\partial f \neq 0$ and $f \neq c$ on this set, $F$ has no critical point there.
    Since $F_a$ coincides with $F$ outside $\bigcup_{i=1}^{k} \overline{V_i}$, it suffices to show that $F_a$ has no critical point on $\overline{V_i \setminus U_i}$ for each $i$.
    We argue by estimating $\|\mathrm{d}F_a - \mathrm{d}F\|$ on these sets.
    Observe
    \begin{equation}
        F_a = F + 2\operatorname{Re}\bigl( (f-c)\overline{A_a} \bigr) + |A_a|^2.
    \end{equation}
    Hence
    \begin{equation}
        \mathrm{d}F_a - \mathrm{d}F = \mathrm{d}\bigl( 2\operatorname{Re}((f-c)\overline{A_a}) \bigr) + \mathrm{d}(|A_a|^2).
    \end{equation}
    We obtain
    \begin{equation}
        \begin{aligned}
            \|\mathrm{d}F_a - \mathrm{d}F\| &= \bigl\| \mathrm{d}\bigl( 2\operatorname{Re}((f-c)\overline{A_a}) \bigr) + \mathrm{d}(|A_a|^2) \bigr\| \\
            &= \bigl\| 2\operatorname{Re}(\overline{A_a}\,\mathrm{d}f + (f-c)\,\mathrm{d}\overline{A_a}) + 2\operatorname{Re}(\overline{A_a}\,\mathrm{d}A_a) \bigr\| \\
            &\le 2 \bigl( |A_a|\,\|\mathrm{d}f\| + |f-c|\,\|\mathrm{d}A_a\| + |A_a|\,\|\mathrm{d}A_a\| \bigr) \\
            &= 2 \bigl( |\langle a^i, Z_i \rangle|\,\|\mathrm{d}f\| + |f-c|\,\|\langle a^i, \mathrm{d}Z_i \rangle\| \\
            &\qquad {}+ |\langle a^i, Z_i \rangle|\,\|\langle a^i, \mathrm{d}Z_i \rangle\| \bigr) \\
            &\le 2 \bigl( \|a^i\|\,\|Z_i\|\,\|\mathrm{d}f\| + |f-c|\,\|a^i\|\,\|\mathrm{d}Z_i\| \\
            &\qquad {}+ \|a^i\|\,\|Z_i\|\,\|a^i\|\,\|\mathrm{d}Z_i\| \bigr).
        \end{aligned}
    \end{equation}

    On the compact set $\overline{V_i \setminus U_i}$ the following functions and forms are bounded:
    \begin{equation}
        Z_i, \quad \mathrm{d}Z_i, \quad f-c, \quad \partial f.
    \end{equation}
    Hence there exists a constant $K_i > 0$ such that
    \begin{equation}
        \|\mathrm{d}F_a - \mathrm{d}F\| \le K_i \|a^i\|
    \end{equation}
    for $\|a^i\|$ sufficiently small.

    Moreover, since $\partial f$ does not vanish on $\overline{V_i \setminus U_i}$, there exists $m_i > 0$ such that
    \begin{equation}
        \|\mathrm{d}F\| = 2|f-c|\,\|\partial f\| \ge m_i
    \end{equation}
    on $\overline{V_i \setminus U_i}$.

    Choose $\epsilon > 0$ such that
    \begin{equation}
        K_i \epsilon \le \frac{m_i}{2}
    \end{equation}
    for every $i$.
    Then whenever $\|a^i\| < \epsilon$,
    \begin{equation}
        \|\mathrm{d}F_a - \mathrm{d}F\| \le \frac{1}{2}\|\mathrm{d}F\|
    \end{equation}
    on $\overline{V_i \setminus U_i}$.

    Second, by Lemma~\ref{lem:perturbation}, we may shrink $\epsilon$ so that for almost all $a^i$ with $\|a^i\| < \epsilon$, the function $f_a$ is Morse on $U_i$ and has exactly $\mu_f(p_i)$ critical points in $U_i$.
    We fix such an $a^i$ for each $i$, chosen so that $F_a$ takes distinct values at distinct critical points; this will be used in Step 2.
    Since $f_a$ is holomorphic on $U_i$ while $F_a$ is real-valued, the critical points of $F_a$ in $U_i$ are determined by
    \begin{equation}
        \partial F_a = \overline{(f_a - c)}\,\partial f_a = 0.
    \end{equation}
    Thus they coincide with the critical points of $f_a$ in $U_i$, and their number equals $\mu_f(p_i)$.

    Finally, we compute the Morse index of each critical point of $F_a$.
    Since $f_a$ is holomorphic on $U_i$, the Hessian of $F_a$ in $(z^i, \bar{z}^{\,i})$ coordinates has the form
    \begin{equation}
        \operatorname{Hess} F_a =
        \begin{pmatrix}
            \overline{(f_a - c)}\,\partial^2 f_a & (\partial f_a)(\overline{\partial f_a})^T \\
            (\overline{\partial f_a})(\partial f_a)^T & (f_a - c)\,\overline{\partial^2 f_a}
        \end{pmatrix}.
    \end{equation}
    Since $f_a$ differs from $f$ by linear terms on $U_i$, their second derivatives coincide:
    \begin{equation}
        \partial^2 f_a = \partial^2 f.
    \end{equation}
    At a critical point $p \in U_i$ we have $\partial f_a(p) = 0$, so the off-diagonal blocks of the Hessian vanish.
    Denote the nonzero constant $\overline{(f_a(p)-c)}$ by $\lambda_p$.
    Then the Hessian of $F_a$ at $p$ has the form
    \begin{equation}
        \operatorname{Hess} F_a(p) =
        \begin{pmatrix}
            \lambda_p\,\partial^2 f(p) & 0 \\
            0 & \overline{\lambda_p\,\partial^2 f(p)}
        \end{pmatrix}.
    \end{equation}
    It equals the Hessian at $p$ of the function $\lambda_p f_a + \overline{\lambda_p f_a} = 2\operatorname{Re}(\lambda_p f_a)$.
    Since $\lambda_p \neq 0$, the functions $F_a$ and $2\operatorname{Re}(f_a)$ have the same Morse index at $p$.
    By standard complex Morse theory, the real part of a holomorphic function on a complex $n$-manifold has Morse index $n$ at a nondegenerate critical point.
    Hence the Morse index of $F_a$ at $p$ equals $n$.

    \medskip
    \noindent\emph{Step 2.} We construct a CW approximation for $(M, f^{-1}(c))$ by a Morse-theoretic argument in the spirit of Milnor \cite{milnor1963morse} and Broughton \cite{Broughton1988}.

    Recall that, by the definition of $\Delta$, $\overline{\Delta} = f_a^{-1}(\overline{D_\delta(c)}) = f^{-1}(\overline{D_\delta(c)})$, where $D_\delta(c)$ denotes the open disk of radius $\delta$ centered at $c$ in $\mathbb{C}$.
    By Proposition~\ref{prop:fibration}, $f^{-1}(c)$ is a deformation retract of $\overline{\Delta}$; it therefore suffices to understand the homotopy type of $(M, \overline{\Delta})$.

    Let
    \begin{equation}
        M^r = F_a^{-1}[0, r], \qquad M^{s,r} = F_a^{-1}[s, r].
    \end{equation}
    Clearly $M^{\delta^2} = \overline{\Delta}$.

    As in Theorems 3.1 and 3.2 of \cite{milnor1963morse}, we claim:
    \begin{enumerate}
        \item If $r \ge s \ge \delta^2$ and $F_a$ has no critical point in $M^{s,r}$, then $M^s$ is a deformation retract of $M^r$.
        \item If $p$ is a critical point of index $k$ with $F_a(p) = s$, then for sufficiently small $\epsilon > 0$, $M^{s+\epsilon}$ has the homotopy type of $M^{s-\epsilon}$ with a $k$-cell attached.
    \end{enumerate}
    Then induction on the critical values of $F_a$ shows that $M$ has the homotopy type of $\overline{\Delta}$ with $\mu$ $n$-cells attached.

    In Milnor's argument the sets $M^{s,r}$ were assumed compact.
    As observed by Broughton, the argument remains valid under the weaker assumptions that
    \begin{enumerate}
        \item each $M^{s,r}$ is complete with respect to the given Riemannian metric;
        \item $\|\nabla F_a\|$ is bounded away from $0$ outside a compact neighborhood of the critical points in $M^{s,r}$.
    \end{enumerate}
    In the present situation $M$ is complete and $F_a$ is fundamentally tame on $M \setminus \Delta$, so the two conditions above are satisfied for every $M^{s,r}$ with $s \ge \delta^2$.
    Therefore the proofs of Theorems 3.1 and 3.2 in \cite{milnor1963morse} apply without modification, and our claims hold.

    We conclude that $M$ has the homotopy type of $f^{-1}(c)$ with $\mu$ $n$-cells attached.
    Consequently,
    \begin{equation}
        H_k(M, f^{-1}(c)) = 0 \quad \text{for } k \neq n,
    \end{equation}
    and
    \begin{equation}
        H_n(M, f^{-1}(c)) \cong \mathbb{Z}^{\mu}.
    \end{equation}
\end{proof}

\begin{remark}\label{rem:broughton}
    When $M = \mathbb{C}^{n+1}$ and $f$ is a tame polynomial in the sense of Broughton, the statement above reduces to a special case of \cite[Theorem 1.2]{Broughton1988}, where only regular fibers are considered.
    Since $\mathbb{C}^{n+1}$ is contractible, the homology of a regular fiber can be recovered from the relative CW approximation of the pair $(M, f^{-1}(c))$ obtained from the Morse-theoretic construction.

    In the present setting, however, $M$ is a complete K\"ahler manifold and need not be contractible.
    What one obtains is therefore the relative homology of the pair $(M, f^{-1}(c))$ rather than the homology of the fiber itself.
\end{remark}

\section{Construction of Cohomological Milnor Fibrations}\label{sec:construction}

In this section we study deformations of the superpotential $f$ and the fibration structure they induce.
Using a fibration theorem analogous to Proposition~\ref{prop:fibration}, we show that nearby fibers are related by isotopies in $M$ depending smoothly on the deformation parameter.
It follows that the relative cohomology groups $H^n(M, f_t^{-1}(c))$ form a smooth vector bundle over the parameter space, equipped with a natural flat connection.
To relate this construction to the Lefschetz thimbles of Picard--Lefschetz theory, we also identify the relative homotopy type of the fibers with that of the corresponding sublevel sets at infinity.

Let $S \subset \mathbb{C}^m$ be a bounded open neighborhood of $0$.
We consider deformations of $f$ of the form
\begin{equation}
    F(z,t) = f(z) + \sum_{i=1}^{m} t^i g_i(z), \quad (z,t) \in M \times S,
\end{equation}
where the $g_i$ are holomorphic functions on $M$ and $g_1 = 1$.

For each fixed $t \in S$, denote
\begin{equation}
    f_t(z) = F(z,t).
\end{equation}

\begin{theorem}\label{thm:deformation}
    Suppose that there exist a compact subset $K \subset M$ and constants $C_i > 0$ for $i = 1, \ldots, m$ such that for every $t \in S$,
    \begin{equation}
        \|\nabla f_t(z)\| > C_i |g_i(z)| \quad \text{for all } z \in M \setminus K.
    \end{equation}
    Then for $c \in \mathbb{C}$ with $|\operatorname{Re} c|$ sufficiently large, the level set
    \begin{equation}
        E_c = \{(z,t) \in M \times S \mid F(z,t) = c\}
    \end{equation}
    is a smooth hypersurface of $M \times S$, and the projection
    \begin{equation}
        \pi : E_c \to S
    \end{equation}
    is a locally trivial smooth fibration.
\end{theorem}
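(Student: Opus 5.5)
The plan is to reduce everything to the region $M\setminus K$, where the hypothesis controls $\nabla f_t$, and then to repeat the explicit-lift argument of Proposition~\ref{prop:fibration}, now lifting vector fields on the parameter space $S$ rather than on the target $\mathbb{C}$.

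\emph{Reduction to $M\setminus K$.} Since $S$ is bounded and $K$ is compact, $F$ is bounded on $K\times\overline{S}$, say $|F|\le R$ there. This uses that $F(z,t)=f(z)+\sum t^i g_i(z)$ with $f$ and the $g_i$ continuous on $M$. Take $c$ with $|\operatorname{Re} c|>R$. Then $E_c\subset (M\setminus K)\times S$.

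\emph{Smoothness and submersion.} Because $g_1=1$, the hypothesis with $i=1$ gives $\|\nabla f_t(z)\|>C_1>0$ for all $z\in M\setminus K$ and $t\in S$. Hence the $z$-differential $\partial_z F=\partial f_t$ never vanishes on $E_c$. This yields both conclusions at once:
\begin{itemize}
\item $\mathrm{d}F\neq 0$ along $E_c$, so $E_c$ is a smooth (indeed complex) hypersurface of $M\times S$;
\item the differential of $\pi$ restricted to $TE_c=\ker \mathrm{d}F$ is surjective onto $T_tS$, so $\pi$ is a submersion.
\end{itemize}
The second point holds because for each direction in $t$ we can correct by a $z$-vector to stay in $\ker\mathrm{d}F$.

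\emph{Explicit lifts.} Following the proof of Proposition~\ref{prop:fibration}, on $(M\setminus K)\times S$ I would define for each $i$ the $(1,0)$-vector field
\begin{equation*}
    W_i=\frac{\partial}{\partial t^i}-\frac{g_i(z)}{\|\nabla f_t(z)\|^2}\,\overline{\nabla f_t}(z).
\end{equation*}
By the computation $\overline{\nabla f_t}(f_t)=\|\nabla f_t\|^2$ used before, we get $W_iF=g_i-g_i=0$. So $W_i$, and $2\operatorname{Re}(aW_i)$ for any $a\in\mathbb{C}$, is tangent to every level set of $F$, in particular to $E_c$, and it projects to $\partial/\partial t^i$. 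The key point is the uniform bound
\begin{equation*}
    \frac{|g_i|}{\|\nabla f_t\|}<\frac{1}{C_i},
\end{equation*}
which says precisely that the $z$-component of $W_i$ is bounded by $1/C_i$ on $M\setminus K$. This is exactly where the hypothesis of the theorem enters.

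\emph{Local trivialization.} Fix $t_0\in S$ and a small open ball $B$ with $\overline{B}\subset S$ centered at $t_0$. For $t\in B$, consider the vector field
\begin{equation*}
    Y_t=2\operatorname{Re}\Bigl(\sum_i (t^i-t_0^i)W_i\Bigr)
\end{equation*}
restricted to $E_c\cap\pi^{-1}(B)$. Its $t$-component is constant, so a trajectory starting over $t$ moves along the straight segment toward $t_0$, which stays in $B$ by convexity. The map $(p,t)\mapsto$ (time-one flow of $p$) then gives a diffeomorphism $\pi^{-1}(B)\to \pi^{-1}(t_0)\times B$. Its inverse is obtained by flowing backward along the same segment, exactly as in the proof of Proposition~\ref{prop:fibration}.

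\emph{Main obstacle: existence of the flow up to time $1$.} This is where properness is missing. The argument I would use is as follows.
\begin{itemize}
\item The velocity of a trajectory is uniformly bounded: the $z$-part by $\sum|t^i-t_0^i|/C_i$, the $t$-part by a constant.
\item So on a finite time interval the trajectory has bounded length. Since $M$ is complete and $\overline{B}$ is compact, its image lies in a compact subset of $M\times\overline{B}$.
\item The trajectory stays in $E_c$, because $Y_t$ is tangent to the level sets of $F$. Also, $E_c\cap(M\times\overline{B})$ is closed and disjoint from $K\times\overline{B}$, since $|\operatorname{Re} c|>R$ forces $|F|>R$ on $E_c$.
\item Hence a limit point exists, still lies in the domain $(M\setminus K)\times S$ where $W_i$ is smooth, and the solution extends.
\end{itemize}
So the maximal existence time exceeds $1$. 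This mirrors the completeness argument of Lemma~\ref{lem:integral-curve} and of Proposition~\ref{prop:fibration}.
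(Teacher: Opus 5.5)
Your argument is correct apart from a sign slip. It reaches local triviality by a different decomposition than the paper, though the ingredients are the same:
\begin{itemize}
\item a bound on $F$ over $K\times\overline S$, so that $E_c$ misses $K\times S$;
\item the case $g_1=1$ of the hypothesis, which gives $\partial_zF\neq0$ on $E_c$;
\item the lifts $W_i=\partial/\partial t^i-\frac{g_i}{\|\nabla f_t\|^2}\overline{\nabla f_t}$, whose $z$-part is bounded by $1/C_i$.
\end{itemize}

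The paper works over a polydisk and inducts on the number of parameters. It lifts one coordinate field $\partial/\partial t^m$ at a time, so that each step is the one-variable situation of Proposition~\ref{prop:fibration} (a single lifted field over a disk). It then obtains a deformation retraction of $\pi^{-1}(U_m)$ onto $\pi^{-1}(U_{m-1})$ by citing that proof, and composes these retractions.

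You instead flow once along a radial combination of all the $W_i$ over a convex ball, which avoids the inductive bookkeeping. You also replace the cutoff function of Proposition~\ref{prop:fibration} by observing that trajectories stay in the closed set $\{F=c\}\cap(M\times\overline B)$, which is disjoint from $K\times\overline B$. Together with the velocity bound and completeness of $M$, your escape-lemma argument makes explicit the global existence of the lifted flow, which the paper only asserts by analogy. Either trivialization supplies the smooth family of diffeomorphisms between fibers used later in Theorem~\ref{thm:bundle}.

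The slip to fix: with coefficients $t^i-t_0^i$, the field $Y_t$ moves the base point along $t+s(t-t_0)$, away from $t_0$. Its time-one map therefore lands over $2t-t_0$, which may lie outside $B$ or even outside $S$. You need the coefficients $t_0^i-t^i$, exactly as in $X_c=(c_0-c)X$ in Proposition~\ref{prop:fibration}. With that change, your statement that the trajectory follows the segment toward $t_0$ inside $B$ holds, and the rest of your argument goes through.
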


\begin{proof}
    First, we prove that $E_c$ is a smooth hypersurface of $M \times S$.

    Taking $i = 1$ in the assumption, we obtain $\|\nabla f_t(z)\| > C_1$ for all $z \in M \setminus K$ and all $t \in S$.
    Hence all critical points of $f_t$ are contained in $K$, uniformly for $t \in S$.
    Moreover, since $S$ is bounded, the values of $f_t$ on $K$ are uniformly bounded for $t \in S$; choose $R > 0$ such that
    \begin{equation}
        |\operatorname{Re} f_t(z)| \le R \quad \text{for all } z \in K \text{ and all } t \in S.
    \end{equation}
    Then for any $c \in \mathbb{C}$ with $|\operatorname{Re} c| > R$, $E_c\cap(K\times S)=\varnothing$.
    Thus for every $(z,t) \in E_c$ we have
    \begin{equation}
        \|\nabla_z F(z,t)\| = \|\nabla f_t(z)\| \neq 0.
    \end{equation}
    Here $\nabla_z F$ denotes the gradient of $F$ with respect to the $z$-variables only.
    Hence $\mathrm{d}F(z,t) \neq 0$ on $E_c$, and $E_c$ is a smooth hypersurface of $M \times S$.

    Next, we prove that $\pi : E_c \to S$ is a locally trivial smooth fibration.

    Fix a point
    \begin{equation}
        t_0 = (t_0^1, \ldots, t_0^m) \in S.
    \end{equation}
    Since $S \subset \mathbb{C}^m$ is open, there exist disks
    \begin{equation}
        D_i \subset \mathbb{C}, \quad 1 \le i \le m,
    \end{equation}
    each centered at $t_0^i$, such that
    \begin{equation}
        D_1 \times \cdots \times D_m \subset S.
    \end{equation}
    For $1 \le k \le m$, denote
    \begin{equation}
        U_k = D_1 \times \cdots \times D_k \times \{t_0^{k+1}\} \times \cdots \times \{t_0^m\}.
    \end{equation}
    It suffices to prove that
    \begin{equation}
        \pi^{-1}(U_m) \to U_m
    \end{equation}
    is trivial over $U_m$.
    We argue by induction on $m$; the case $m = 0$ is trivial.
    Assume that for $m - 1$ the projection is trivial over $U_{m-1}$ and that there exists a smooth deformation retraction of $\pi^{-1}(U_{m-1})$ onto $\pi^{-1}(U_0) = \pi^{-1}(t_0)$ whose restriction to $\pi^{-1}(t^1, \ldots, t^{m-1}, t_0^m)$ is an isotopy for every $(t^1, \ldots, t^{m-1}, t_0^m) \in U_{m-1}$.

    On $E_c$, define the vector field
    \begin{equation}
        X_m = \frac{\partial}{\partial t^m} - \frac{g_m(z)}{\|\nabla_z F(z,t)\|^2}\, \overline{\nabla_z F(z,t)}.
    \end{equation}
    We compute
    \begin{equation}
        \begin{aligned}
            X_m F(z,t)
            &= \frac{\partial F}{\partial t^m} - \frac{g_m(z)}{\|\nabla_z F(z,t)\|^2}\, \overline{\nabla_z F(z,t)}\, F \\
            &= g_m(z) - \frac{g_m(z)}{\|\nabla_z F(z,t)\|^2}\, \|\nabla_z F(z,t)\|^2 \\
            &= 0.
        \end{aligned}
    \end{equation}
    Hence $X_m$ is tangent to $E_c$.

    Moreover,
    \begin{equation}
        \pi_* X_m = \frac{\partial}{\partial t^m},
    \end{equation}
    so $X_m$ lifts the constant vector field $\partial/\partial t^m$.

    Take the standard Hermitian metric on $S$.
    By the assumption, for every $(z,t) \in E_c$ we have
    \begin{equation}
        \|X_m\| = \left\| \frac{\partial}{\partial t^m} - \frac{g_m(z)}{\|\nabla_z F(z,t)\|^2}\, \overline{\nabla_z F(z,t)} \right\| \le 1 + \frac{|g_m(z)|}{\|\nabla_z F(z,t)\|} < 1 + \frac{1}{C_m}.
    \end{equation}
    Hence $X_m$ is smooth and bounded on $E_c$.

    By an argument analogous to that in Proposition~\ref{prop:fibration}, there exists a smooth deformation retraction of $\pi^{-1}(U_m)$ onto $\pi^{-1}(U_{m-1})$ whose restriction to $\pi^{-1}(t^1, \ldots, t^{m-1}, t^m)$ is an isotopy for every $(t^1, \ldots, t^{m-1}, t^m) \in U_m$.
    By the induction hypothesis, there also exists a smooth deformation retraction of $\pi^{-1}(U_m)$ onto $\pi^{-1}(t_0)$ whose restriction to $\pi^{-1}(t^1, \ldots, t^m)$ is an isotopy for every $(t^1, \ldots, t^m) \in U_m$.
    This yields a trivialization of $\pi$ over $U_m$.
\end{proof}

We now clarify the relation between regular fibers and sublevel regions of $f$.

For $a > 0$, define
\begin{equation}
    M_{-}^{a} := \{p \in M : \operatorname{Re}(f(p)) \le -a\},
\end{equation}
\begin{equation}
    M_{+}^{a} := \{p \in M : \operatorname{Re}(f(p)) \ge a\}.
\end{equation}

\begin{proposition}\label{prop:homotopy}
    Let $(M, f)$ be a fundamentally tame LG model.
    There exist constants $R_{-} > 0$ and $R_{+} > 0$ such that:

    (1) For any $a, b \ge R_{-}$, the pairs $(M, M_{-}^{a})$ and $(M, M_{-}^{b})$ are homotopy equivalent.
    Moreover, for any $a \ge R_{-}$ and any $c \in \mathbb{C}$ with $\operatorname{Re}(c) \le -R_{-}$, the pair $(M, M_{-}^{a})$ is homotopy equivalent to $(M, f^{-1}(c))$; we denote their common homotopy type by $(M, f^{-\infty})$.

    (2) For any $a, b \ge R_{+}$, the pairs $(M, M_{+}^{a})$ and $(M, M_{+}^{b})$ are homotopy equivalent.
    Moreover, for any $a \ge R_{+}$ and any $c \in \mathbb{C}$ with $\operatorname{Re}(c) \ge R_{+}$, the pair $(M, M_{+}^{a})$ is homotopy equivalent to $(M, f^{-1}(c))$; we denote their common homotopy type by $(M, f^{+\infty})$.
\end{proposition}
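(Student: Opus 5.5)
We treat only part (1); part (2) follows by replacing $f$ with $-f$, which is again fundamentally tame. Let $K$ and $C$ be as in the definition of fundamental tameness, so that $\|\nabla f\|>C$ on $M\setminus K$. Set
\[
R_{-}:=\max_{K}|\operatorname{Re} f|+1 .
\]
Then the closed region $\{\operatorname{Re} f\le -R_{-}+1/2\}$ lies in $M\setminus K$. On this region $u=\operatorname{Re} f$ has $\|\nabla u\|=\tfrac12\|\nabla f\|>C/2$, and all values $c$ with $\operatorname{Re} c\le -R_{-}$ are regular values of $f$.

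\emph{Independence of $a$.} Fix $b>a\ge R_{-}$. We push $M^{a}_{-}$ onto $M^{b}_{-}$ along the flow of the normalized field $Y=-\nabla u/\|\nabla u\|^2$, cut off by a function equal to $1$ on $\{-b\le u\le -a\}$ and supported in $\{-b-1/2<u<-a+1/2\}$. This field is bounded, so since $M$ is complete its flow is complete. Along it $u$ decreases at unit speed wherever the cutoff equals $1$. Flowing for time $b-a$, and stopping each point once it reaches $\{u=-b\}$ (Milnor's standard retraction formula from Theorem 3.1 of \cite{milnor1963morse}), gives a deformation retraction of $M^{a}_{-}$ onto $M^{b}_{-}$. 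By the same argument as in Lemma~\ref{lem:integral-curve}, flow lines cannot escape in finite time. Hence $M^{b}_{-}\hookrightarrow M^{a}_{-}$ is a homotopy equivalence, and with the identity on $M$ we get a homotopy equivalence of pairs $(M,M^b_-)\simeq(M,M^a_-)$.

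\emph{Comparison with a fiber.} Take $c$ with $\operatorname{Re} c\le -R_{-}$, and set $a:=-\operatorname{Re} c$. By the previous step it suffices to treat this particular $a$. Write $H=\{w:\operatorname{Re} w\le -a\}$, so that $M^{a}_{-}=f^{-1}(H)$. Since $H$ is a closed convex half-plane contained in the set of regular values, I would run the argument of Proposition~\ref{prop:fibration} with $U$ replaced by $H$ and $c_0$ replaced by $c$. The lifted field $\tilde X=\overline{\nabla f}/\|\nabla f\|^2$ is bounded on $f^{-1}(H')$ for a slightly larger open half-plane $H'$, because $\|\nabla f\|>C$ there.

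The main obstacle is that $H$ is unbounded, so the field $X_w=(c-w)X$ is no longer uniformly bounded. I would avoid this by not using straight-line homotopies with unbounded speed. Instead, I would deform in two stages, each with a bounded field.
\begin{itemize}
\item First, flow by the lift of the horizontal field $-\partial_{\operatorname{Re}}$, rescaled. Concretely, use the field $\operatorname{Re}(f-c)\cdot(\text{lift of }-1)$, truncated so that the speed is bounded; a field of the form $\rho(\operatorname{Re} f)\cdot 2\operatorname{Re}\tilde X$ is enough. Alternatively, to handle unbounded $\operatorname{Im} f$, proceed exactly as in the proof of Proposition~\ref{prop:fibration}, noting that the flow speed there only needs to be bounded on each flow line over the time interval $[0,1]$. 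For points with $|f(p)-c|\le N$ the speed is at most $N\sup\|\tilde X\|$, which gives completeness on each bounded piece. Since every point lies in such a piece, the map $H(p,t)=\phi_{f(p)}(p,t)$ is well defined and continuous.
\item This gives a deformation retraction of $f^{-1}(H)$ onto $f^{-1}(c)$, fiberwise along the segments $[f(p),c]\subset H$.
\end{itemize}
Hence $f^{-1}(c)\hookrightarrow M^{a}_{-}$ is a homotopy equivalence, so $(M,f^{-1}(c))\simeq(M,M^{a}_{-})$. Combined with the first step, the relative homotopy type $(M,f^{-\infty})$ is independent of $a\ge R_-$ and of $c$.

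The step I expect to be delicate is precisely this local-boundedness and completeness claim for the straight-line flow over the unbounded half-plane. Local existence on the compact time interval $[0,1]$ follows from the speed bound $|f(p)-c|\sup\|\tilde X\|$ together with the completeness of $M$, exactly as in the argument giving \eqref{eqn:2.11}.
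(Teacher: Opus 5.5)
Your proof is correct, but it is organized differently from the paper's. The paper picks $R_-$ so that the closed half-plane $\{\operatorname{Re} w\le -R_-\}$ contains no critical values. It uses Theorem~\ref{thm:range} to put this half-plane inside $f(M)$ and invokes Proposition~\ref{prop:fibration} to get a locally trivial fibration over it. It then lifts the deformation retractions of the base onto $\{\operatorname{Re} w\le -a\}$ and onto $\{c\}$. This is short and uses the fibration theorem as a black box. However, it tacitly relies on the homotopy lifting property of locally trivial fibrations over paracompact bases, and it does not spell out how one passes to equivalences of pairs.

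You instead write down explicit global deformations. For the independence of $a$ you use a Milnor-type cut-off normalized gradient flow of $u$. For the comparison with $f^{-1}(c)$ you run the straight-line lifted flow from the proof of Proposition~\ref{prop:fibration} over the whole unbounded half-plane. This is more self-contained. It needs neither the lifting theorem nor the range theorem, since the flow itself carries points of $M_-^a$ to points of $f^{-1}(c)$. And because both flows are defined on all of $M$ and preserve the relevant subsets, they give homotopy equivalences of pairs directly.

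Two things to tidy up:
\begin{itemize}
\item The ``main obstacle'' you flag is not one. For each fixed $w$ the field $(c-w)X$ is bounded, so $\bigl(2\operatorname{Re}((c-w)X),0\bigr)$ has a complete smooth flow on $M\times\mathbb{C}$. Hence $H(p,t)=\phi_{f(p)}(p,t)$ is smooth on $M\times[0,1]$, and the two-stage horizontal/vertical detour in your first bullet can be dropped.
\item The pair homotopy inverse of $(M,M_-^b)\hookrightarrow(M,M_-^a)$ is not ``the identity on $M$''. It is the time-$(b-a)$ map of your global cut-off flow, which sends $M_-^a$ into $M_-^b$. It is joined to the identity through maps preserving both sublevel sets, because $u$ is nonincreasing along the flow. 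Likewise, $H(\cdot,1)$ on all of $M$ serves as the inverse in the fiber comparison.
\end{itemize}
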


\begin{proof}
    We prove the negative case; the positive case is analogous.

    For $a \ge 0$, set
    \begin{equation}
        \mathbb{C}_{-}^{a} := \{c \in \mathbb{C} : \operatorname{Re}(c) \le -a\}.
    \end{equation}
    Then $M_{-}^{a} = f^{-1}(\mathbb{C}_{-}^{a})$.

    Since $f$ is fundamentally tame, there exists $R_{-} > 0$ such that $\mathbb{C}_{-}^{R_{-}}$ contains no critical value of $f$.
    By Theorem~\ref{thm:range}, $\mathbb{C}_{-}^{R_{-}}$ is contained in the range of $f$; by Proposition~\ref{prop:fibration}, the restriction
    \begin{equation}
        f : M_{-}^{R_{-}} \to \mathbb{C}_{-}^{R_{-}}
    \end{equation}
    is a smooth locally trivial fibration.

    For $a \ge R_{-}$, the half-plane $\mathbb{C}_{-}^{a}$ is a deformation retract of $\mathbb{C}_{-}^{R_{-}}$.
    Since the fibration has no critical value over $\mathbb{C}_{-}^{R_{-}}$, this deformation retract lifts to a homotopy equivalence of pairs
    \begin{equation}
        (M, M_{-}^{a}) \simeq (M, M_{-}^{R_{-}}).
    \end{equation}
    Similarly, for any $c$ with $\operatorname{Re}(c) \le -R_{-}$, the singleton $\{c\}$ is a deformation retract of $\mathbb{C}_{-}^{R_{-}}$.
    Hence
    \begin{equation}
        (M, f^{-1}(c)) \simeq (M, M_{-}^{R_{-}}).
    \end{equation}
    This completes the proof.
\end{proof}

We now come to the main result of this section.
Combining the canonical homotopy type at infinity with the deformation fibration, we obtain the following global statement.

\begin{theorem}\label{thm:bundle}
    Assume the hypotheses of Theorem~\ref{thm:deformation} and suppose that $M$ is Stein.
    Let $(M, f_t^{\pm\infty})$ denote the canonical relative homotopy type defined in Proposition~\ref{prop:homotopy}.
    Then the relative cohomology groups
    \begin{equation}
        \mathcal{H}^{\pm} = \coprod_{t\in S} H^n(M, f_t^{\pm\infty}; \mathbb{C})
    \end{equation}
    form a smooth vector bundle over $S$ of rank $\mu$, endowed with a natural flat Gauss--Manin connection.

    Moreover, the total space
    \begin{equation}
        \mathcal{H}_{\mathrm{tot}} = \mathcal{H}^{+} \oplus \mathcal{H}^{-}
    \end{equation}
    is called the cohomological Milnor bundle associated with the deformation $F$.
\end{theorem}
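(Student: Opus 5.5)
We prove Theorem~\ref{thm:bundle} by realizing $\mathcal{H}^{\pm}$ fiberwise as the relative cohomology of $(M, f_t^{-1}(c))$ for a single fixed value $c$ with $|\operatorname{Re} c|$ large. The flat structure will then come from the trivializations of $\pi:E_c\to S$ supplied by Theorem~\ref{thm:deformation}.

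\emph{Uniform choice of $c$ and rank count.} We first choose $R$ as in the proof of Theorem~\ref{thm:deformation}, so that $|\operatorname{Re} f_t|\le R$ on $K$ for all $t\in S$. Since all critical points of $f_t$ lie in $K$ (take $i=1$ in the hypothesis), each $f_t$ is fundamentally tame with the same $K$ and $C_1$. Hence the constants $R_\pm$ of Proposition~\ref{prop:homotopy} can be taken to be $R$, uniformly in $t$.

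Fix $c_+$ with $\operatorname{Re} c_+>R$ and $c_-$ with $\operatorname{Re} c_-<-R$. Proposition~\ref{prop:homotopy} then identifies $H^n(M,f_t^{\pm\infty};\mathbb{C})$ canonically with $H^n(M,f_t^{-1}(c_\pm);\mathbb{C})$. The identification is canonical because the lifted deformation retractions are unique up to homotopy, so the induced map on cohomology is independent of choices.

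For the rank we use that $M$ is Stein. By Andreotti--Frankel it has the homotopy type of a CW complex of dimension at most $n$, and the same holds for the Stein hypersurfaces $f_t^{-1}(c)$ in dimension $n-1$. In particular $H^*(M,f_t^{-1}(c))$ is concentrated in degrees $\le n$ and is finitely generated.

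Where $f_t$ has finitely many critical points, Theorem~\ref{thm:main} (through universal coefficients) gives rank $\mu_{f_t}$. The total Milnor number is constant in $t$: all critical points stay in the compact set $K$ uniformly, so the count of zeros of $\partial f_t$ with multiplicity is locally constant by conservation of number. This gives rank $\mu=\mu_{f_0}$ everywhere. If some $f_t$ has nonisolated critical points, this is exactly where Stein-ness and the uniform bounds must be used, and the rank statement has to be interpreted via $\mu_{f_0}$. I would restrict to the case where the critical points are isolated for all $t$, which is implicit in the definition of $\mu$.

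\emph{Local triviality and the connection.} By Theorem~\ref{thm:deformation}, over a polydisc $U\ni t_0$ there is a trivialization $\pi^{-1}(U)\cong \pi^{-1}(t_0)\times U$. It is built from flows of the bounded vector fields $X_m$ on $E_c$. Each $X_m$ is the restriction of a vector field on $M\times S$ covering $\partial/\partial t^m$, whose $M$-component is bounded, so its flow is complete on $M$ by completeness of $M$.

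The flow therefore yields diffeomorphisms $\Phi_t:M\to M$ carrying $f_{t_0}^{-1}(c)$ onto $f_t^{-1}(c)$. These induce isomorphisms $\Phi_t^*:H^n(M,f_t^{-1}(c))\to H^n(M,f_{t_0}^{-1}(c))$, which give local trivializations of $\mathcal{H}^\pm$.

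The transition maps between two such trivializations are induced by diffeomorphisms of pairs isotopic to the identity through pairs, because $U$ and the overlaps are contractible. Hence the transition maps are the identity on cohomology and locally constant. This produces a local system, and therefore a smooth bundle with a flat connection: the Gauss--Manin connection, whose flat sections are the classes transported by these isotopies.

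Independence of the choice of $c_\pm$ follows from Proposition~\ref{prop:homotopy} applied in families: moving $c$ within $\{\operatorname{Re} c>R\}$ is again an isotopy of pairs, and it commutes up to homotopy with the $t$-flows.

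\emph{Main obstacle.} The delicate step is extending the vector field from $E_c$ to all of $M\times S$ so that the resulting diffeomorphisms act on the whole pair $(M,f_t^{-1}(c))$. I would first try the formula
\[
\frac{\partial}{\partial t^m}-\rho\,\frac{g_m}{\|\nabla_z F\|^2}\,\overline{\nabla_z F},
\]
with $\rho$ a cutoff equal to $1$ near $\{|\operatorname{Re} F|>R+1\}$ and supported in $M\setminus K$. The hypothesis $\|\nabla f_t\|>C_m|g_m|$ on $M\setminus K$ keeps this field bounded, and $\rho\equiv1$ near $E_{c_\pm}$ keeps it tangent to $E_{c_\pm}$.
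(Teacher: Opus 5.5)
Your proposal is correct in substance and has the same architecture as the paper's proof. You take a uniform $R$ with $|\operatorname{Re} f_t|\le R$ on $K$, fix $c_\pm$, use the identification of Proposition~\ref{prop:homotopy}, transport along the trivialization of $\pi:E_c\to S$ from Theorem~\ref{thm:deformation}, form a local system, and get the rank from Theorem~\ref{thm:main}.

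The real difference is how the transport acts on the pair. The paper projects the trivialization of $E_c$ to $M$, which gives diffeomorphisms $\Psi_{t_1,t_2}:f_{t_1}^{-1}(c)\to f_{t_2}^{-1}(c)$ of the level sets alone. It then contracts $U$ to $t_0$ to obtain an isotopy of level sets inside $M$, and asserts that this induces the isomorphisms on $H^n(M,\,\cdot\,)$. An isotopy of the subspace is not a map of pairs, so that step tacitly needs either a homotopy-invariance argument for the cofiber of $f_t^{-1}(c)\hookrightarrow M$, or an ambient extension; the latter is not automatic for noncompact fibers.

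Your cutoff field $\partial/\partial t^m-\rho\, g_m\overline{\nabla_z F}/\|\nabla_z F\|^2$ is exactly such an extension:
\begin{itemize}
\item it is smooth because $\rho$ vanishes near $K\times S$;
\item it is bounded by $1+1/C_m$ by the hypothesis with $i=m$;
\item it is tangent to $E_c$ where $\rho\equiv 1$;
\item its $S$-component is constant, so by completeness of $M$ its time-one maps are diffeomorphisms of $M$ carrying $f_{t_0}^{-1}(c)$ onto $f_t^{-1}(c)$.
\end{itemize}
This buys genuine diffeomorphisms of pairs, so the isomorphisms on relative cohomology and their local constancy are immediate. The only cost is the cutoff, for which you should take $|\operatorname{Re} c_\pm|>R+1$ so that $E_{c_\pm}$ lies where $\rho\equiv1$. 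One phrasing fix: the transition maps $\Phi_t^{-1}\circ\Phi'_t$ go between different pairs, so they are not ``the identity''. What you need, and have, is that they vary continuously in $t$, hence induce maps that are constant on connected components of overlaps.

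The loose end you leave on the rank is not actually there, and resolving it is presumably what the Stein hypothesis is for; the paper's proof invokes Theorem~\ref{thm:main} without comment. For each $t\in S$, the critical locus of $f_t$ is a closed analytic subset of $M$ contained in $K$ (take $i=1$), hence compact. On a Stein manifold every compact analytic subset is finite: holomorphic functions are constant on its connected components by the maximum principle, yet they separate points of $M$. So every $f_t$ has finitely many critical points, Theorem~\ref{thm:main} applies to all of them, and your restriction to isolated critical points can be dropped. Constancy of the rank on the component of $0$ in $S$ then follows directly from local triviality, though your conservation-of-number argument is also fine.

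Finally, drop the claim that Andreotti--Frankel yields finite generation. It gives a CW complex of dimension at most $n$, not one of finite type; for example, $\mathbb{C}$ minus infinitely many points is Stein. Finiteness and concentration in degree $n$ come from Theorem~\ref{thm:main}.
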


\begin{proof}
    We prove the statement for $\mathcal{H}^{+}$; the case of $\mathcal{H}^{-}$ is analogous.
    By Theorem~\ref{thm:deformation}, for $c \in \mathbb{C}$ with $\operatorname{Re} c$ sufficiently large, the projection
    \begin{equation}
        \pi : E_c \to S
    \end{equation}
    is a locally trivial smooth fibration, where
    \begin{equation}
        E_c = \{(z,t) \in M \times S \mid F(z,t) = c\}.
    \end{equation}

    By the assumption, we may choose the constant $R_+$ of Proposition~\ref{prop:homotopy} uniformly in $t$, and then choose $c$ so that Proposition~\ref{prop:homotopy} applies to $f_t$ for all $t \in S$.
    Hence the pair $(M, f_t^{-1}(c))$ has the homotopy type $(M, f_t^{+\infty})$ for every $t \in S$.

    Fix $t_0 \in S$ and let $U \subset S$ be a contractible open neighborhood of $t_0$.
    By Theorem~\ref{thm:deformation}, there exists a smooth trivialization
    \begin{equation}
        \Phi : \pi^{-1}(U) \longrightarrow \pi^{-1}(t_0) \times U.
    \end{equation}
    For $t_1,t_2 \in U$, this induces a diffeomorphism
    \begin{equation}
        \Phi_{t_1,t_2} : \pi^{-1}(t_1) \longrightarrow \pi^{-1}(t_2),
    \end{equation}
    depending smoothly on $t_1, t_2$.

    Since the projection to $M$
    \begin{equation}
        p_t : \pi^{-1}(t) \longrightarrow f_t^{-1}(c)
    \end{equation}
    is a diffeomorphism for each $t$, we obtain a smooth family of diffeomorphisms
    \begin{equation}
        \Psi_{t_1,t_2} := p_{t_2} \circ \Phi_{t_1,t_2} \circ p_{t_1}^{-1} : f_{t_1}^{-1}(c) \longrightarrow f_{t_2}^{-1}(c).
    \end{equation}

    Because $U$ is contractible, there exists a smooth deformation retraction
    \begin{equation}
        r : U \times [0,1] \longrightarrow U,
    \end{equation}
    with $r(t,0) = t$ and $r(t,1) = t_0$.
    The composition
    \begin{equation}
        H(z,t,\tau) = \Psi_{t, r(t,\tau)}(z)
    \end{equation}
    defines a smooth isotopy in $M$ between the level sets $f_t^{-1}(c)$ and $f_{t_0}^{-1}(c)$ for all $t \in U$.

    This isotopy induces canonical isomorphisms
    \begin{equation}
        H^n(M, f_t^{\pm\infty}; \mathbb{C}) \cong H^n(M, f_{t_0}^{\pm\infty}; \mathbb{C}),
    \end{equation}
    varying smoothly with $t$.
    Hence the relative cohomology groups form a local system over $S$, and therefore determine a smooth vector bundle equipped with the flat Gauss--Manin connection.
    By Theorem~\ref{thm:main}, the rank of this bundle equals $\mu$.
\end{proof}

\begin{remark}\label{rem:picard-lefschetz}
    In classical Picard--Lefschetz theory, when $f$ is a Morse function the Lefschetz thimbles associated with its critical points form a basis of the relative homology
    \begin{equation}
        H_n\bigl(M, \{\operatorname{Re} f \ll 0\}\bigr),
    \end{equation}
    see for instance \cite{milnor1968singular}.
    This description relies on the Morse condition and therefore applies only to those deformation parameters for which the function $f_t$ has nondegenerate critical points.

    In contrast, the construction above does not require $f_t$ to be Morse.
    The relative homotopy type at infinity is identified directly with that of a regular fiber, so that the resulting relative cohomology bundle and its flat connection are defined over the whole deformation parameter space.
\end{remark}

\nocite{CIR2014,FanLanYang2020,Marsh2013TheBC}

\printbibliography

\end{document}